\documentclass[12pt,a4paper]{amsart}


\usepackage{euscript,amsfonts,amssymb,amsmath,amscd,amsthm,enumerate,hyperref,mathrsfs}

\usepackage{tikz,bm,float}
\usetikzlibrary{calc}
\colorlet{lgray}{white!85!black}
\colorlet{lred}{white!75!red}

\usepackage{comment}

\usepackage{graphicx}

\usepackage{color}

\usepackage[margin=1.1in]{geometry}
\newtheorem{theorem}{Theorem} 
\newtheorem*{theorem*}{Theorem}
\newtheorem{lemma}[theorem]{Lemma}

\newtheorem{proposition}[theorem]{Proposition}

\newtheorem{corollary}[theorem]{Corollary}

\theoremstyle{remark}
\newtheorem{remark}[theorem]{Remark}

\numberwithin{equation}{section} \numberwithin{theorem}{section}


\newcommand{\N}{\mathbb N}

\newcommand{\Z}{\mathbb Z}

\newcommand{\R}{\mathbb R}

\newcommand{\C}{\mathbb C}

\newcommand{\eps}{\varepsilon}


\usepackage{MnSymbol}

\usepackage{skak}

\usepackage{adjustbox}
\usetikzlibrary{arrows}
\usetikzlibrary{decorations.pathreplacing}

\sloppy

\title[ASEP via Mallows]
{ASEP via Mallows coloring}

\author{Alexei Borodin}

\address[Alexei Borodin]{Department of Mathematics, MIT, Cambridge, USA. E-mail: borodin@math.mit.edu}

\author{Alexey Bufetov}

\address[Alexey Bufetov]{Institute of Mathematics, Leipzig University, Germany. E-mail: alexey.bufetov@gmail.com}

\begin{document}

\begin{abstract}
	
In this paper we study the asymptotic behavior of the Asymmetric Simple Exclusion Process (=ASEP) with finitely many particles. It turns out that a certain randomized initial condition is the most amenable to such an analysis. Our main result is the behavior of such an ASEP in the KPZ limit regime. 
A key technical tool introduced in the paper --- the coloring of ASEP particles with the use of random Mallows permutations --- may be of independent interest. 

\end{abstract}

\maketitle

\section{Introduction}

The Asymmetric Simple Exclusion Process (or ASEP, for short) is a prototypical interacting particle system and a prominent member
of the KPZ (Kardar-Parisi-Zhang) universality class in (1+1) dimensions. 
It is a continuous-time Markov chain on the set of configurations of particles and holes on the integer lattice $\mathbb Z$. 
Particles move to the right with rate 1 and to the left with rate $q \in [0;1)$, provided that the target site is empty.
It is common to consider ASEP initial conditions with infinitely many particles, with the step initial condition, 
when particles occupy all nonnegative integers and only them, arguably being the simplest non-stationary one.

In this note we demonstrate an explicit connection between ASEPs with two different initial conditions. The first one 
is an arbitrary (possibly random) infinite subset $S$ of $\mathbb Z$ with the largest element existing almost surely.
The second one is a $K$-element subset $S_K$ of $S$ that is chosen according to a \emph{Mallows coloring} of $S$.
For $q=0$, $S_K$ is simply the $K$ right-most elements of $S$, which makes sense as in that case the evolution of 
those elements is independent of the rest of the system. 
For $q>0$, $S_K$ is the $K$ right-most elements of a permuted ordering of $S$, where the permutation is 
distributed according to the Mallows measure on infinite permutation (see Section \ref{sec:MalDef} below for its definition).

This connection turns out to be asymptotically useful, as we demonstrate in the following two examples.

First, we leave $q$ and $K$ fixed and let the time go to infinity in the case of the step initial condition. Using the known 
asymptotics of the infinite system from \cite{BO2}, we obtain an expression for the height function in the diffusive limit 
of the $K$-particle ASEP (this limit is actually independent of where those $K$ particles start from). The answer is given in 
terms of a multiplicative functional of a determinantal point process with the discrete Hermite kernel. For $q=0$, it is well known 
that this diffusive limit is described by the Warren process \cite{W} that is closely related to the GUE ensembles \cite{GS}, 
for which many formulas are available. For $q>0$, this kind of result appears to be new.  

Second, we consider the weakly asymmetric limit $q\to 1$, in which it is known that the ASEP converges to the KPZ equation, 
together with the same step initial condition. 
Our goal was to find a scaling of $K$ that would yield a limiting distribution that would be different from the one for the 
infinite system. This is indeed possible: At the very tail of our block of $K$ particles with appropriately tuned $K$, we observe that the properly scaled 
height function converges to a certain deterministic function of the usual narrow wedge KPZ limit. To our best knowledge, 
such a limiting object has not appeared in the ASEP context before.  

While we focus on one-point height distributions, the Mallows coloring technique can be used to study 
multi-point distributions as well; necessary ingredients are also provided in the text. 

Finally, let us note that while we prove results for the standard single-species ASEP, considering 
its colored or multi-species version seems essential for our proofs. 

\subsection*{Acknowledgments}

A. Borodin was partially supported by the NSF grants DMS-1664619,
DMS-1853981, and the Simons Investigator program. A. Bufetov was partially supported by the European Research Council (ERC), Grant Agreement No. 101041499. 

\section{The Mallows measure}

\subsection{Definition and properties}
\label{sec:MalDef}

We will use notations $\Z_n := \{1,2, \dots, n \}$ and $\N = \{1,2,3, \dots\}$. The goal of this section is to recall definitions and properties of \textit{Mallows random permutations} in the finite $\Z_n \to \Z_n$ and infinite $\N \to \N$ cases. Throughout the paper let $q$ be a real number, $0 \le q < 1$.

The Mallows measure $\mathcal{M}_n$ on the set of permutations $\Z_n \to \Z_n$ assigns a permutation $w$ the probability $q^{\mathrm{inv}(w)} Y_n$, where $\mathrm{inv}(w)$ is the number of inversions in $w$ and $Y_n$ is a normalisation constant given by
$$
Y_n := \prod_{j=1}^n \frac{1-q}{1-q^j}.
$$
Another description of the measure $\mathcal{M}_n$ can be given with the use of the so-called \textit{q-exchangeability} property. Let $w(i) = a$, $w(i+1) = b$, and let $w_{i,i+1}$ be a permutation that coincides with $w$ for all elements of $\Z_n$ with the exception that $w_{i,i+1} (i) = b$, $w_{i,i+1} (i+1) = a$. Then for any $i,a,b$ one has
$$
q \mathcal{M}_n (w) = \mathcal{M}_n (w_{i,i+1}) , \ \ \ \mbox{if $a<b$} ; \qquad \qquad  \mathcal{M}_n (w) = q \mathcal{M}_n (w_{i,i+1}) , \ \ \ \mbox{if $a>b$},
$$
and $\mathcal{M}_n$ is the unique probability measure on permutations that satisfies the property.

Consider now the case of infinite permutations $\N \to \N$. The first definition of the Mallows measure does not have a direct analog in this situation (each individual infinite permutation will have probability zero), however, as shown by Gnedin-Olshanski \cite{GO1}, the second definition can be generalized to this case in a very natural way. Supply the space of maps $\N \to \N$ with the sigma-algebra generated by cylinders, and let $\mathbf{w}$ be a cylinder in this set obtained by fixing the images of finitely many integers, including $\mathbf{w}(i) = a$, $\mathbf{w}(i+1) = b$. Let $\mathbf{w}_{i,i+1}$ be a cylinder that coincides with $\mathbf{w}$ for all fixed coordinates and their images with the exception that $\mathbf{w}_{i,i+1} (i) = b$, $\mathbf{w}_{i,i+1} (i+1) = a$. Then the \textit{Mallows measure} $\mathcal{M}_{\N}$ is a unique probability measure on the set of infinite permutations $\N \to \N$ which satisfies
$$
q \mathcal{M}_{\N} (\mathbf{w}) = \mathcal{M}_{\N} (\mathbf{w}_{i,i+1}) , \ \ \ \mbox{if $a<b$} ; \qquad \qquad  \mathcal{M}_{\N} (\mathbf{w}) = q \mathcal{M}_{\N} (\mathbf{w}_{i,i+1}) , \ \ \ \mbox{if $a>b$},
$$
for all $a$, $b$, and all cylinders.
See \cite{GO1} for a proof of the correctness of this definition.

The random permutation $\N \to \N$ distributed according to the measure $\mathcal{M}_{\N}$ can be sampled via the following algorithm. Let $G$ be a geometric distribution on $\N$ defined by
$$
G (i) = q^{i-1} (1-q), \qquad i=1,2,\dots.
$$
Let $\xi_1, \xi_2, \dots, \xi_{n}, \dots$ be an infinite sequence of independent random variables distributed according to $G$.
The algorithm below also uses a (dynamically changing) word which consists of integers from $\N$. Its initial form is $1 2 3 \dots n \dots$ (all integers are written in their linear order from left to right; each integer occurs exactly once), and this word will change during the algorithm.

At step 1 of the sampling algorithm we set $w(1)= \xi_1$. Then we remove the integer $w(1)$ from the word $1 2 3 4 \dots n \dots$. At step 2, we set $w(2)$ to be equal to the integer which stands at position $\xi_2$ (counting from the left) in the current word (at step 2 it can be $\xi_2+1$ or $\xi_2$, depending on whether $\xi_1 \le \xi_2$ or not). Then we remove the chosen letter from the word and iterate the procedure. At step number $k$, we set $w(k)$ to be equal to the integer which stands at position $\xi_k$ (counting from the left) in the current word, and then remove the chosen integer from the word. We iterate the process infinitely and obtain as a result a Mallows-random infinite permutation $w(1) w(2) \dots w(n) \dots$.


The described algorithm is due to Gnedin-Olshanski, see \cite[Section 4]{GO1} for a proof and a more detailed discussion.

\subsection{One-point height function}

Let $w$ be a random permutation distributed according to the Mallows measure $\mathcal{M}_{\N}$. Let us define a \textit{height function} $f_{\N,K,L}$, which is an integer-valued random variable with parameters $K,L \in \Z_{\ge 0}$, via
$$
f_{\N,K,L} := | \{ i : 1 \le i \le L, 1 \le w(i) \le K \}  |.
$$

The main goal of this section is to prove the following result.

\begin{proposition}
For any integers $s,K,L \ge 0$, $s \le \min(K,L)$, we have
\begin{equation}
\label{eq:MalHeightInf}
\mathrm{Prob} \left( f_{\N,K,L} =s \right) = q^{(K-s)(L-s)} \frac{\prod_{i=0}^{s-1} (1-q^{K-i}) (1-q^{L-i})}{\prod_{i=1}^s (1-q^i)}.
\end{equation}
\end{proposition}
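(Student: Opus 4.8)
The plan is to run the Gnedin--Olshanski sampling algorithm recalled above and to track a single statistic as it proceeds. The crucial structural observation is that the dynamically changing word is, at every step, precisely the increasing list of not-yet-used integers: removing entries never disturbs the increasing order. Consequently, if $j$ of the values $\{1,\dots,K\}$ have already been assigned, then the remaining $K-j$ of them occupy exactly the first $K-j$ positions of the current word, while all larger unused integers occupy positions $K-j+1, K-j+2, \dots$. I would let $X_k$ denote the number of indices $i \le k$ with $w(i) \le K$, so that $X_L = f_{\N,K,L}$.

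First I would argue that $(X_k)$ is a Markov chain of very simple type. At step $k$, conditionally on the whole history $\xi_1,\dots,\xi_{k-1}$, the set of used values — hence the current word — is determined; by the contiguity observation, if $X_{k-1}=j$ then $w(k)\le K$ if and only if the chosen position satisfies $\xi_k \le K-j$. Since the $\xi_k$ are independent and $\mathrm{Prob}(\xi_k \le K-j) = \sum_{i=1}^{K-j} q^{i-1}(1-q) = 1-q^{K-j}$, the conditional distribution of the next step depends on the past only through $j$: the chain moves $j \mapsto j+1$ (a ``success'') with probability $1-q^{K-j}$ and stays at $j$ (a ``failure'') with probability $q^{K-j}$, irrespective of \emph{which} particular small values were used. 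Thus $f_{\N,K,L}$ is the number of successes in $L$ steps of this non-decreasing chain started from $0$.

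Next I would compute $\mathrm{Prob}(f_{\N,K,L}=s)$ by decomposing over trajectories. A trajectory ending at state $s$ passes through $0,1,\dots,s$, performing one success out of each state $j=0,\dots,s-1$ and spending $m_j\ge 0$ failure steps at state $j$ (with $m_s$ failures at the terminal state), subject to $m_0+\cdots+m_s = L-s$. Each success out of state $j$ contributes $1-q^{K-j}$ and each failure at state $j$ contributes $q^{K-j}$, so
\begin{equation*}
\mathrm{Prob}(f_{\N,K,L}=s) = \left( \prod_{j=0}^{s-1} (1-q^{K-j}) \right) \sum_{\substack{m_0,\dots,m_s \ge 0 \\ m_0 + \cdots + m_s = L-s}} q^{\sum_{j=0}^{s} (K-j)\,m_j}.
\end{equation*}
The leading product already matches the factor $\prod_{i=0}^{s-1}(1-q^{K-i})$ appearing in \eqref{eq:MalHeightInf}.

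It then remains to evaluate the sum, which is the complete homogeneous symmetric function $h_{L-s}$ in the $s+1$ variables $q^{K}, q^{K-1}, \dots, q^{K-s}$. Since these form the geometric progression $q^{K-s}(1,q,\dots,q^s)$, one can pull out the factor $(q^{K-s})^{L-s} = q^{(K-s)(L-s)}$ and invoke the standard principal specialization $h_{L-s}(1,q,\dots,q^s) = \binom{L}{s}_q = \prod_{i=0}^{s-1}(1-q^{L-i})\big/\prod_{i=1}^{s}(1-q^i)$, which recovers exactly the right-hand side of \eqref{eq:MalHeightInf}. The only step requiring genuine care is the contiguity claim underpinning the Markov reduction — that the unused values $\le K$ always form a prefix of the current word, so that the threshold $K-j$ depends on the past solely through $j$; once that is established, the remainder is a routine $q$-series computation.
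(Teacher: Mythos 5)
Your proposal is correct and follows essentially the same route as the paper: both run the Gnedin--Olshanski sampling algorithm and observe that, since the word always remains the increasing list of unused integers, the event $w(k)\le K$ has conditional probability $1-q^{K-j}$ depending on the past only through the count $j$ of small values already used (the paper's $\tilde K = K-j$). The only difference is cosmetic bookkeeping at the end: the paper sums over two-letter words weighted by inversions to produce $q^{K(L-s)}\binom{L}{s}_{q^{-1}}$, while you decompose by failure run-lengths and evaluate $h_{L-s}\bigl(q^{K},\dots,q^{K-s}\bigr)$ via the principal specialization $h_{L-s}(1,q,\dots,q^{s})=\binom{L}{s}_{q}$ --- two standard, equivalent expressions for the same $q$-binomial factor.
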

\begin{proof}
For the proof we need to analyze the sampling algorithm from Section \ref{sec:MalDef}. We are interested in the prefix of the permutation $w(1) w(2) \dots w(L)$, and for each position we need to distinguish between only two variants: Whether the integer there is less or equal to $K$ or (strictly) greater than $K$. We will denote by $\alpha$ the integers from $1$ to $K$, and by $\beta$ the integers from $K+1$ to $\infty$. In these notations, the starting word involved in the algorithm is $\alpha \alpha \dots \alpha \beta \beta \dots \beta \dots$ with $K$ alphas and infinitely many betas. We need to run $L$ steps of the sampling algorithm, and to keep track of how the word evolves.

In more detail, we think about the prefix $w(1) w(2) \dots w(L)$ as a word from $\{ \alpha, \beta \}^L$, and this word is obtained by the procedure below. Set $\tilde K := K$; the value of $\tilde K$ may change during the algorithm. We iterate $L$ times the following steps for each of $i=1,2,\dots L$:

a) The letter at position $i$ is set to $\alpha$ with probability $\left( 1-q^{\tilde K} \right)$, and it is set to $\beta$ with probability $q^{\tilde K}$.

b) If $\alpha$ was chosen, then $\tilde K$ decreases by 1. If $\beta$ was chosen, then $\tilde K$ does not change.

\

It is readily verified that these steps correspond to the sampling algorithm above. In particular, $\tilde K$ tracks the number of the remaining integers which are $\le L$ in the word at each step of the sampling. This procedure assigns to each element of $\{ \alpha, \beta \}^L$ a probability, and we need to sum these probabilities over all words with exactly $s$ letters $\alpha$ in order to reach the claim of the proposition. 

Let us analyze one such sequence $w$. The $s$ letters $\alpha$ necessarily come with probability $(1-q^K) (1 -q^{K-1} ) \cdots (1-q^{K-s+1})$ (this expression does not depend on the sequence), while the letters $\beta$ have probability $ q^{K(L-s)-m(w)}$, where $m(w)$ is the number of inversions in $w$ (we assume that $\alpha < \beta$). Summing over all sequences $w$ and applying the combinatorial description of the $q$-Binomial coefficient, we obtain the expression
$$
(1-q^K) (1 -q^{K-1}) \cdots (1-q^{K-s+1}) q^{K(L-s)} \binom{L}{s}_{q^{-1}},
$$
which is identical to the formula from the statement. 

\end{proof}

\begin{remark}
Note that expression \eqref{eq:MalHeightInf} is symmetric in $K$ and $L$. While such a symmetry is not immediate from the definition or the sampling algorithm, it reflects the known fact that Mallows measures are invariant under the transform $w \mapsto w^{-1}$.
\end{remark}

\subsection{Multi-point height function}

Let $w$ be an infinite random permutation $\N \to \N$ distributed according to the Mallows measure $\mathcal{M}_{\N}$. Let us define a \textit{multi-point height function} as an $r$-dimensional vector of random variables of the following form:
\begin{multline}
\label{deff:MalHeightInfManyPoints}
\mathbf{f}_{\N, K} (L_1, [L_1+1;L_2], \dots, [L_{r-1}+1;L_r] ) \\ := \left( f_{\N,K,L_1}, f_{\N,K,L_2} - f_{\N,K,L_1}, f_{\N,K,L_3} - f_{\N,K,L_2}, \dots, f_{\N,K,L_r} - f_{\N,K,L_{r-1}} \right),
\end{multline}
for $K \ge 0$, $L_r \ge L_{r-1} \ge \dots \ge L_1$.
The main goal of this section is to prove the following result.

\begin{proposition}
For any $(s_1, s_2, \dots, s_r) \in \{0,1,2, \dots, \}^r$, such that $s_1+s_2+ \dots + s_r \le K$, we have
\begin{multline}
\label{eq:MalHeightInfManyPoints}
\mathrm{Prob} \left( \mathbf{f}_{\N, K} (L_1, [L_1+1;L_2], \dots, [L_{r-1}+1;L_r] ) = (s_1,s_2, \dots, s_r) \right) \\
= \mathrm{Prob} \left( f_{\N, K,L_1 } = s_1  \right) \mathrm{Prob} \left( f_{\N, K-s_1,L_2 - L_1 } = s_2 \right) \cdots \mathrm{Prob} \left( f_{\N, K-s_1-s_2-\dots-s_{r-1},L_r - L_{r-1} } = s_{r} \right) .
\end{multline}
\end{proposition}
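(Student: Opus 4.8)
The plan is to reuse the $\alpha$-$\beta$ sampling procedure from the proof of the single-point Proposition and to exploit its Markovian structure. Recall that, after replacing every image by its type ($\alpha$ if $\le K$, $\beta$ if $>K$), the prefix $w(1)\cdots w(L_r)$ is generated position by position: at step $i$ one sets the letter to $\alpha$ with probability $1-q^{\tilde K}$ and to $\beta$ with probability $q^{\tilde K}$, where $\tilde K$ is the current number of remaining $\alpha$'s, and $\tilde K$ decreases by one precisely when an $\alpha$ is produced. The decisive observation is that the transition at step $i$ depends on the past only through the current value $\tilde K$. Hence the sequence $\tilde K_0 = K, \tilde K_1, \dots, \tilde K_{L_r}$, where $\tilde K_i$ denotes the value of $\tilde K$ after position $i$, is a time-homogeneous Markov chain, and by construction $\tilde K_i = K - f_{\N,K,i}$.

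First I would translate the target event into a statement about this chain. Writing $\sigma_j := s_1 + \dots + s_j$ (with $\sigma_0 := 0$ and $L_0 := 0$), the event $\{\mathbf{f}_{\N, K}(L_1, [L_1+1;L_2], \dots, [L_{r-1}+1;L_r]) = (s_1,\dots,s_r)\}$ coincides with $\{f_{\N,K,L_j} = \sigma_j \text{ for all } 1 \le j \le r\}$, which in turn is the event $\{\tilde K_{L_j} = K - \sigma_j \text{ for all } j\}$. Applying the Markov property at the ordered times $L_1 < L_2 < \dots < L_r$ factorizes its probability into the product over $j$ of the conditional probabilities $\mathrm{Prob}(\tilde K_{L_j} = K - \sigma_j \mid \tilde K_{L_{j-1}} = K - \sigma_{j-1})$.

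The remaining step is to identify each conditional factor. Because the transition law depends only on the current value of $\tilde K$, restarting the chain from the value $\kappa := K - \sigma_{j-1}$ and running it for $L_j - L_{j-1}$ steps is \emph{verbatim} the $\alpha$-$\beta$ sampling underlying the single-point height $f_{\N, \kappa, L_j - L_{j-1}}$; the number of $\alpha$'s produced in that stretch equals $s_j$ exactly when $\tilde K$ drops from $\kappa$ to $\kappa - s_j = K - \sigma_j$. Therefore each factor equals $\mathrm{Prob}(f_{\N, K - \sigma_{j-1}, L_j - L_{j-1}} = s_j)$, and assembling the product yields precisely the right-hand side of \eqref{eq:MalHeightInfManyPoints}. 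I expect the only point requiring genuine care to be this \emph{restart/time-homogeneity} identification: one must verify that the conditional distribution of the increment over $[L_{j-1}+1;L_j]$, given the starting count $\kappa$, agrees on the nose with the unconditional single-point distribution of $f_{\N, \kappa, L_j - L_{j-1}}$, rather than merely sharing its support. Once the Markov chain is in place this reduces to observing that the rules (a)--(b) are unaffected by shifting the index of the starting position, so the argument is essentially structural rather than computational.
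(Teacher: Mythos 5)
Your proposal is correct and follows essentially the same route as the paper: both arguments run the $\alpha$-$\beta$ reduction of the Gnedin--Olshanski sampling algorithm over the whole prefix $w(1)\cdots w(L_r)$, observe that after $L_{j-1}$ steps only the current count $K-\sigma_{j-1}$ of remaining $\alpha$'s matters and that the subsequent steps use fresh independent geometric variables, and thereby identify each conditional increment with the single-point distribution $f_{\N, K-\sigma_{j-1}, L_j-L_{j-1}}$. Your phrasing in terms of a time-homogeneous Markov chain on $\tilde K$ is just a slightly more formal packaging of the paper's independence-of-sampling-variables argument.
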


\begin{proof}

For the proof we need to use the sampling algorithm from Section \ref{sec:MalDef} again. We start with the prefix of the permutation $w(1) w(2) \dots w(L_1)$. The chance that exactly $s_1$ of these letters are $\le K$ equals $\mathrm{Prob} \left( f_{\N, K,L_1} = s_1  \right)$ immediately from the definition of the algorithm. Therefore, $(K-s_1)$ slots remain free when we start sampling the places of integers from $[L_1+1;L_2]$. Since the random variables which are used in this sampling are independent from the ones used in the first step, we obtain that the chance that exactly $s_2$ integers from  
$[L_1+1;L_2]$ occupy these spots, is equal to $\mathrm{Prob} \left( f_{\N, K-s_1,L_{2} -L_1 } = s_1  \right)$. Iterating this argument, we arrive at the statement of the proposition.
\end{proof}

\begin{remark}
The right-hand side of \eqref{eq:MalHeightInfManyPoints} has an explicit product form due to \eqref{eq:MalHeightInf}.
\end{remark}

\section{Mallows coloring of ASEP particles}
\label{sec:mallows-coloring}

\subsection{Single-species ASEP}

Let $q \in [0;1)$. We consider ASEP on $\Z$. This is a continuous time Markov chain with a state space
\begin{equation*}
\Omega :=\left\{\xi\in \{0,1\}^{\Z} \right\}.
\end{equation*}
We think of the $1's$ as particles, and of the $0's$ as holes. The dynamics of ASEP can be described as follows: Each particle has two exponential clocks of rate 1 and $q$. When the clock of rate 1 rings, the particle attempts to make a unit step to the right. When the clock of rate $q$ rings, the particle attempts to make a unit step to the left. The attempt is successful if the target site is occupied by a hole: The hole and the particle exchange their positions when the particle moves a unit step.  If the attempt is not successful, nothing happens. All clocks of all particles are jointly independent. 

Let $S$ be a (possibly random) subset of $\Z$ such that the largest element of $S$ almost surely exists. Consider the ASEP evolution which starts from the initial configuration $S$; randomness involved in jumps of particles is assumed to be independent from the randomness of $S$. Let $S_t$ be the set of positions of particles after time $t$. Define the ASEP height function by
\begin{equation}
\label{eq:heightAsepDef}
h_{t,S} (x) := \left\{ \mbox{number of $y \in S_t$ such that $y \ge x$} \right\}, \qquad x \in \R.
\end{equation}

It follows from the standard graphical construction of ASEP (see \cite{H1,H2}, and also below) that under our assumptions, $h_{t,S} (x)$ is well-defined and almost surely finite for any $t$ and $x$.

\subsection{Multi-species ASEP}

While our main results deal with a single-species ASEP, it will be important for us to consider
ASEP with countably many colors  of particles; instead of colors we may speak interchangeably of types, classes, or species of particles.

The state space of this multi-species ASEP is the set of all maps $\Z \to \Z \cup \{ +\infty \}$ which we denote by $\mathfrak{S}$. For $w\in \mathfrak{S}$, we interpret an equality of the form $w(i)=j$ as the information that position $i$ is occupied by the particle with color $j$.
Our convention is that particles with lower color have priority over particles with higher color. Informally, this means that the particle with color $j$ treats all particles with color $>j$ as holes, while it is treated by all particles with color $<j$ as a hole.

Here we briefly recall the graphical construction of this multi-species ASEP which goes back to Harris \cite{H1} (see also, e.g., \cite{AAV}). Let $(\mathcal{P}(z),z\in\Z)$ be a collection of independent, rate $1$ Poisson processes  constructed on some probability space $(\hat{\Omega},\mathcal{A}, \mathbb{P} ).$ By $\mathcal{P}_{t}(z)$ we denote the value of the Poisson process on $\R_{\ge 0}$ at time $t$ (it counts the number of points in $[0;t]$ of the Poisson process $\mathcal{P} (z)$).
For fixed $t$, the independence of the Poisson processes implies that  for almost every $\omega\in \hat{\Omega}$ there is a sequence $(i_{n},n\in \Z)$ of integers such that
\begin{equation*}
\cdots <i_{n-2}< i_{n-1}<i_{n}<i_{n+1}<i_{n+2}<\cdots,
\end{equation*}
and $\mathcal{P}(i_{n})=0,n\in \Z.$ In words, this means none of Poisson processes $\{ \mathcal{P}_{t}(i_{n}) \}$ has a point in the interval $[0;t]$.

Given $w \in \mathfrak{S}$ and $z\in \Z$, we define a swapped configuration by
\begin{equation*}
\sigma_{z,z+1}(w)(i)=
\begin{cases}
w(i+1)  &\mathrm{for} \, i=z,\\
w(i-1)  &\mathrm{for} \, i=z+1,\\
w(i)   &\mathrm{else}.
\end{cases}
\end{equation*}
We can now describe the dynamics of the multi-species ASEP. 
When at time $\tau$ one of the Poisson processes $\mathcal{P}(z)$ has a jump, we update the ASEP configuration as follows: If $w_{\tau^{-}}(z)<w_{\tau^{-}}(z+1)$, then we update $w_{\tau}=\sigma_{z,z+1}(w_{\tau^{-}})$, whereas if
 $w_{\tau^{-}}(z)>w_{\tau^{-}}(z+1), $ we toss an independent coin such that with probability $1-q$, $w_{\tau}=w_{\tau^{-}},$ and with probability $q$ we have  $w_{\tau}=\sigma_{z,z+1}(w_{\tau^{-}})$. Note that to construct the process up to time $t,$ it suffices to apply these update rules inside each of the finite boxes $[i_{n}+1;i_{n+1}],$ and inside each box there are a.s. only finitely many jumps of the Poisson processes (in particular, there is a.s. a well-defined first jump) during the interval $[0,t]$. Also note that a.s. two jumps cannot happen at the same time, hence the graphical construction is well-defined.

The multi-species ASEP can be projected to various single-species ASEPs; in other words, one can view the multi-species ASEP as a coupling of several single-species ASEPs. Namely, let us fix an integer $k$ and identify all particles whose color lies in  $(-\infty,k]$ as particles (of a single-species ASEP), and all particles with color in $(k, +\infty)$ as holes, \emph{i.e.}, we map $w(\cdot)\mapsto 1_{(-\infty,k]}(w(\cdot)).$ This defines a map $X^{k}: \mathfrak{S}\to \{0,1\}^{\Z}$, the image of which is the ASEP on $\Z$ with particles and holes. For any $k \in \Z$ this projection produces a single-species ASEP; for varying $k$ these ASEPs are coupled.

\subsection{Mallows coloring}

Let $S$ be an arbitrary (possibly random) infinite subset of $\Z$ such that the largest element of it exists almost surely. Let $\mathrm{ord}: S \to \N$ be a unique strictly monotonically decreasing bijection; in words, we number all elements of $S$ by natural numbers from right to left. Let $\pi_{\N}$ be an infinite random permutation distributed according to the Mallows measure $\mathcal{M}_{\N}$. It is assumed to be independent of the randomness involved in $S$.  
We define $\mathcal{MC}_S$ as the superposition of the map $\pi_{\N}$ and the ordering map $\mathrm{ord}$:
$$
\mathcal{MC}_S (z) := \pi_{\N} \left( \mathrm{ord} (z) \right), \qquad z \in S,
$$
and say that $\mathcal{MC}_S: S \to \N$ is the $\textit{Mallows coloring}$ of the set $S$. Note that $\mathcal{MC}_S$ is a random map. 

Now let us consider a multi-species ASEP with the following initial configuration: Let $S$ be as above, place at all locations in $\Z \setminus S$ particles of type $+\infty$ (one can think about them as holes), and place at $z \in S$ a particle of color $\mathcal{MC}_S (z)$, for all $z \in S$. Denote by $S_t$ the set of positions which are occupied by particles with colors from $\N$ after running the ASEP evolution during time $t$. We have a map $\mathrm{Colors}_t: S_t \to \N$ which assigns to each position the color of the particle which is located there. The following proposition plays a key role in the present paper.

\begin{proposition}
\label{prop:MalColPreserve}
For any multi-species ASEP of the form described above, any $t \in \R_{\ge 0}$, and $\hat S := S_t$, the map $\mathrm{Colors}_t$ is the Mallows coloring of $\hat S$.
\end{proposition}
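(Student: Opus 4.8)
The plan is to exploit two structural features of the construction: the positions of the colored particles evolve autonomously of the coloring, and the coloring itself is only reshuffled by an explicit, Mallows-reversible rule. I would therefore first decouple the set from the colors, and then condition on the trajectory of the set and analyze the resulting dynamics on colorings.

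The first step is to note that the set of occupied positions is a single-species ASEP, independent of $\pi_{\N}$. Identifying every finite color with a particle and the color $+\infty$ with a hole is a projection of the multi-species ASEP onto a single-species ASEP, in the spirit of the projections $X^k$: the update rules depend only on the relative order of the two colors at an edge, and a swap of two finite colors leaves the ``finite versus $+\infty$'' pattern intact. Hence $(S_u)_{0 \le u \le t}$ is a single-species ASEP started from $S$, driven by the Poisson clocks and the particle--hole coins alone, and in particular it is independent of the Mallows permutation $\pi_{\N}$ used to build the initial coloring. This allows me to condition on the whole trajectory $(S_u)_{0 \le u \le t}$ and study only the conditional law of the coloring.

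Next I would identify how the coloring changes. A swap of a colored particle with a hole moves a particle but leaves the right-to-left sequence of colors literally unchanged, so it does not affect the order-permutation. The only moves that alter the coloring are swaps of two colored particles at an edge $(z,z+1)$; such a move is exactly the adjacent transposition of the entries in order-positions $i$ and $i+1$, where $z+1$ and $z$ are the $i$-th and $(i+1)$-th largest elements of the current set. Crucially, whether these two particles are spatially adjacent --- and hence the rate at which the transposition is attempted --- is a function of $S_u$ only and does not see the colors. Comparing the multi-species update rule with the inversion count of the order-sequence, an attempted transposition that creates an inversion is accepted with probability $q$, while one that destroys an inversion is accepted with probability $1$. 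I would then check that $\mathcal{M}_{\N}$ is reversible for this time-inhomogeneous but color-blind dynamics: for two colorings related by a single transposition at $(i,i+1)$, the defining $q$-exchangeability relation gives $\mathcal{M}_{\N}(\text{more inversions}) = q\,\mathcal{M}_{\N}(\text{fewer inversions})$, and combined with the acceptance probabilities $q$ and $1$ and the common attempt rate this yields a termwise detailed-balance identity. Since every elementary generator preserves $\mathcal{M}_{\N}$ and the attempt rates are nonnegative and color-independent, the entire conditional generator preserves $\mathcal{M}_{\N}$ at each instant. As the initial coloring is Mallows and, by the first step, independent of $(S_u)$, it is conditionally Mallows at time $0$ and thus remains conditionally Mallows at time $t$; averaging over trajectories shows that $\mathrm{Colors}_t$ is the Mallows coloring of $\hat S = S_t$.

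The main obstacle I anticipate is making the conditioning and the reversibility rigorous in the infinite setting. One must argue that, given the set-trajectory, the colored--colored clock rings form a genuine color-independent point process, so that the coloring is a well-defined (time-inhomogeneous) Markov chain, and one must upgrade ``reversible under each finite transposition'' to invariance of the infinite measure $\mathcal{M}_{\N}$. I expect to treat the latter through cylinder events and the Gnedin--Olshanski characterization from \cite{GO1}, reducing every statement to finitely many coordinates, while the former is controlled by the gap structure $\{i_n\}$ of the graphical construction, which confines all activity to finite boxes on any bounded time interval.
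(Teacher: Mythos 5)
Your proposal is correct and follows essentially the same route as the paper: both split the elementary updates into particle--hole swaps (which move the set but leave the right-to-left color sequence intact) and colored--colored swaps (which act as adjacent transpositions on the order-sequence but fix the set), then use $q$-exchangeability to show each such transposition preserves $\mathcal{M}_{\N}$, with independence of coloring and set following because the two groups of updates act on disjoint data. Your detailed-balance identity $\frac{1}{1+q}\cdot q = \frac{q}{1+q}\cdot 1$ is exactly the two-case stationarity computation in the paper's footnote, just phrased as reversibility, and your acceptance probabilities (create an inversion with probability $q$, destroy with probability $1$) match the paper's update rules.
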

\begin{proof}
At time $t=0$ the map $\mathrm{Colors}_0$ satisfies the claim by our definition of the initial configuration.
For the time evolution of ASEP, we can split the basic updates into two disjoint groups. The first group consists of updates which affect 0 or 1 elements of $S_t$. These updates do not change the coloring inside $S_t$, though they might change the set $S_t$ itself. The second group consists of updates which affect 2 elements of $S_t$. The result of such an update might affect the coloring of $S_t$, but it does not change $S_t$. Due to the $q$-exchangeability property of the Mallows measure, the coloring always remains Mallows-distributed\footnote{ Indeed, assume that two neighboring positions contain particles of colors $i$ and $j$, for $i < j$. Due to the $q$-exchangeability, we have probability $\frac{1}{1+q}$ that they are in order $\dots j i \dots$, and probability $\frac{q}{1+q}$ that they are in order $\dots i j \dots$. Applying the update rules formulated above, we see that in the first case we get $\dots j i \dots$ with probability $(1-q)$ and $\dots i j \dots$ with probability $q$, while the second case produces $\dots j i \dots$ with probability 1. Collecting the terms, we obtain exactly the same probabilities, satisfying the $q$-exchangeability, after the update.}.
Moreover, since only updates from the second group change coloring, and only updates of the first group change $S_t$, the map $\mathrm{Colors}_t$ remains independent from randomness in $S_t$ for any $t \ge 0$.


\end{proof}

\begin{remark}
Note that the proof provides a stronger statement than formulated in the proposition, since it deals with the elementary swaps only. This allows to prove in a similar way the same statement for a different choice of time (for example, one can consider discrete time leading to the stochastic six vertex model), and also for a different choice of generator of a random walk on a Hecke algebra (see \cite{B}; for example, one can prove an analogous statement for q-TAZRP). We omit precise reformulations, since they can be obtained straightforwardly, and since we will not study asymptotics of other interacting particle systems in the present paper.
\end{remark}

\subsection{Distributional identities}
\label{sec:dist-id}

As before, let $S$ be an arbitrary (possibly random) infinite subset of $\Z$ such that its largest element exists almost surely.
Let $\mathcal{MC}_S (z)$ be its Mallows coloring, and define the set $\hat S_K$, $K \ge 0$, via
$$
\hat S_K := \{ \mathcal{MC}_S (i) \}_{1 \le i \le K}.
$$
For $q=0$ these are just the $K$ right-most elements of $S$. For $q \ne 0$, the set $\hat S_K$ is random (even if $S$ is deterministic), but still informally one can think of $\hat S_K$ as being ``not too far'' from the $K$ right-most positions.

Consider a single-species ASEP which starts from the initial configuration $S$, and recall that we denote its \emph{height function} \eqref{eq:heightAsepDef} by $h_{t,S}$. We will also consider a single-species ASEP which starts from the initial configuration $\hat S_K$, and its height function $h_{t,\hat S_K}$. We show that these two height functions are strongly related to each other.

\begin{theorem}
\label{th:One-Point-Coloring}
For any $x \in \R$ and $s,K,L \in \Z_{\ge 0}$, we have
\begin{equation}
\label{eq:One-Point-Coloring}
\mathrm{Prob} \left( h_{t,\hat S_K} (x) = s \right) = \sum_{L \ge s} \mathrm{Prob} \left( h_{t,S} (x) = L \right) \mathrm{Prob} \left( f_{\N,K,L} (x) =s \right).
\end{equation}
\end{theorem}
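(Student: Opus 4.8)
The plan is to reduce the identity to Proposition~\ref{prop:MalColPreserve} through the multi-species coupling. First I would realize the single-species ASEP started from $\hat S_K$ as a projection of the colored process. Concretely, set up the multi-species ASEP with the Mallows-colored initial condition of Section~\ref{sec:mallows-coloring} and apply the projection $X^K$ that declares colors in $(-\infty,K]$ to be particles and colors in $(K,+\infty)$ to be holes. At time $0$ this projection produces exactly the configuration $\hat S_K$, and since the law of a single-species ASEP depends only on the law of its initial configuration (the jump randomness being independent of $S$ and of the Mallows permutation), the projected process has the same distribution as the ASEP started from $\hat S_K$. In particular,
$$ h_{t,\hat S_K}(x) \stackrel{d}{=} \bigl| \{ z \in S_t : z \ge x,\ \mathrm{Colors}_t(z) \le K \} \bigr|, $$
so the left-hand side of \eqref{eq:One-Point-Coloring} counts the color-$\le K$ particles lying weakly to the right of $x$ at time $t$.

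Next I would condition on the uncolored set $S_t$. By Proposition~\ref{prop:MalColPreserve}, the coloring $\mathrm{Colors}_t$ is a Mallows coloring of $\hat S = S_t$ and is independent of $S_t$. Thus, conditionally on $S_t$, the colors are obtained by numbering the elements of $S_t$ from right to left via $\mathrm{ord}$ and then applying a permutation $w$ with law $\mathcal{M}_\N$. Under this numbering, the elements of $S_t$ lying weakly to the right of $x$ are precisely those with $\mathrm{ord}$-index in $\{1,2,\dots,L\}$, where $L = h_{t,S}(x) = |S_t \cap [x,\infty)|$. A position with index $i$ carries color $w(i)$, and it is counted above exactly when $1 \le w(i) \le K$. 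Hence the conditional count equals $|\{1 \le i \le L : 1 \le w(i) \le K\}|$, which is by definition the Mallows height function $f_{\N,K,L}$.

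Combining the two steps, conditionally on $\{h_{t,S}(x)=L\}$ the variable $h_{t,\hat S_K}(x)$ has the law of $f_{\N,K,L}$, and this conditional law does not otherwise depend on $S_t$. Summing over the possible values of $L$ then gives
$$ \mathrm{Prob}\bigl(h_{t,\hat S_K}(x)=s\bigr) = \sum_{L} \mathrm{Prob}\bigl(h_{t,S}(x)=L\bigr)\, \mathrm{Prob}\bigl(f_{\N,K,L}=s\bigr), $$
and since $f_{\N,K,L}=s$ forces $s\le L$, only the terms with $L\ge s$ survive, yielding \eqref{eq:One-Point-Coloring}.

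The step I expect to require the most care is the passage from the pathwise coupling to the conditional distributional statement. One must observe that $h_{t,S}(x)=|S_t\cap[x,\infty)|$ is a measurable function of $S_t$ alone, so that conditioning on $\{h_{t,S}(x)=L\}$ amounts to conditioning on a function of $S_t$, and then invoke the independence of $\mathrm{Colors}_t$ from $S_t$ in the precise form furnished by Proposition~\ref{prop:MalColPreserve} in order to factor the joint probability. The identification of the rightmost $L$ elements of $S_t$ with the $\mathrm{ord}$-indices $1,\dots,L$ is where the right-to-left convention built into the Mallows coloring is essential, and I would state it explicitly to rule out an off-by-reflection error.
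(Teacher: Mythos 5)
Your proof is correct and follows essentially the same route as the paper's: coupling the two single-species ASEPs through the multi-species process with Mallows-colored initial data, invoking Proposition~\ref{prop:MalColPreserve}, conditioning on $h_{t,S}(x)=L$, and identifying the conditional count with $f_{\N,K,L}$. Your extra care about the independence of $\mathrm{Colors}_t$ from $S_t$ (needed to factor the conditional probability) is a point the paper relies on as well --- it is stated explicitly at the end of the paper's proof of Proposition~\ref{prop:MalColPreserve} --- so you have simply made the argument more explicit, not different.
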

\begin{proof}
Consider the Mallows coloring of $S$ and the multi-species ASEP which starts from this coloring as the initial configuration (in the initial configuration positions $\Z \setminus S$ are assumed to be filled by holes of color $+\infty$). It is crucial that this multi-species process couples two single-species ASEPs involved in the statement of the theorem. Indeed, treating species of colors from $\N$ as particles and species of color $+\infty$ as holes, we recover the process which starts from $S$, while treating species of colors from $[1;K]$ as particles and species of color from $>K$ as holes, we see the process which starts from $\hat S_K$.

As before, let $S_t$ be the set of positions occupied by particles after time $t$. By Proposition \ref{prop:MalColPreserve}, the colors of particles inside the set are distributed according to the Mallows coloring. Assume that exactly $L$ elements of this set are greater or equal to $x$. Then we are interested in how many of these positions are occupied by particles of color $\le K$. This is exactly the quantity given by $f_{\N,K,L}$ (recall that \eqref{eq:MalHeightInf} provides an explicit formula for it). Summing over all possible values of $L$, we arrive at the statement of the theorem.
\end{proof}

\begin{theorem}
\label{th:Many-Point-Coloring}
For any $r \ge 1$, $x_1 \ge x_2 \ge \dots \ge x_r \in \R$, and $m_1 \le m_2 \le \dots \le m_r \in \Z_{\ge 0}$, we have
\begin{multline}
\mathrm{Prob} \left( h_{t,\hat S_K} (x_1) = m_1, h_{t,\hat S_K} (x_2) = m_2, \dots h_{t,S_K} (x_r) = m_r \right) \\ = \sum_{L_i: \forall i \ L_i \ge m_i, L_i \le L_{i+1}} \mathrm{Prob} \left( h_{t,S} (x_1) = L_1, h_{t,S} (x_2) = L_2, \dots h_{t,S} (x_r) = L_r \right) \\ \times \mathrm{Prob} \left( \mathbf{f}_{\N, K} (L_1, [L_1+1;L_2], \dots, [L_{r-1}+1;L_r] ) = (m_1,m_2-m_1, \dots, m_r-m_{r-1}) \right).
\end{multline}
\end{theorem}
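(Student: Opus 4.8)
The plan is to follow the proof of Theorem \ref{th:One-Point-Coloring} verbatim in structure, upgrading the single spatial point to the ordered family $x_1 \ge x_2 \ge \dots \ge x_r$. I would again run the multi-species ASEP started from the Mallows coloring of $S$ (with $\Z \setminus S$ filled by color $+\infty$), which simultaneously realizes both single-species processes in the statement: projecting the colors in $\N$ to particles recovers the process started from $S$, while projecting the colors in $[1;K]$ to particles recovers the process started from $\hat S_K$. In this coupling the two families of height functions $h_{t,S}(x_i)$ and $h_{t,\hat S_K}(x_i)$ are measurable functions of a single configuration, so there is only one joint law to analyze.

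First I would condition on the set $S_t$ of occupied positions together with the event $\{ h_{t,S}(x_i) = L_i \}_{i=1}^r$. Since $x_1 \ge x_2 \ge \dots \ge x_r$, the $L_i$ right-most elements of $S_t$ are exactly those $\ge x_i$, and these right-most blocks are nested; this forces $L_1 \le L_2 \le \dots \le L_r$, which is precisely the summation constraint $L_i \le L_{i+1}$. By Proposition \ref{prop:MalColPreserve} the colors on $S_t$ form a Mallows coloring that is \emph{independent} of $S_t$ itself; writing $\mathrm{ord}_t \colon S_t \to \N$ for the right-to-left ordering, there is a Mallows permutation $w$ with $\mathrm{Colors}_t = w \circ \mathrm{ord}_t$, and $w$ does not depend on the geometry of $S_t$.

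The central observation is that, on this event, a position contributing to $h_{t,\hat S_K}(x_i)$ is exactly a position among the $L_i$ right-most whose color is $\le K$, so $h_{t,\hat S_K}(x_i) = |\{ j : 1 \le j \le L_i,\ w(j) \le K \}| = f_{\N,K,L_i}$. Hence the joint event $\{ h_{t,\hat S_K}(x_i) = m_i \}_{i=1}^r$ coincides with $\{ f_{\N,K,L_i} = m_i \}_{i=1}^r$, which by the definition \eqref{deff:MalHeightInfManyPoints} of the multi-point height function is exactly the event $\mathbf{f}_{\N,K}(L_1, [L_1+1;L_2], \dots, [L_{r-1}+1;L_r]) = (m_1, m_2 - m_1, \dots, m_r - m_{r-1})$; the coordinates of $\mathbf{f}_{\N,K}$ are the increments of the nested counts $f_{\N,K,L_i}$, which is why its definition is tailored to produce exactly the $m_i - m_{i-1}$. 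Because $w$ is independent of $S_t$, the conditional probability of this event equals $\mathrm{Prob}(\mathbf{f}_{\N,K}(\dots) = (m_1, m_2 - m_1, \dots, m_r - m_{r-1}))$, and multiplying by the probability $\mathrm{Prob}(h_{t,S}(x_1) = L_1, \dots, h_{t,S}(x_r) = L_r)$ of the conditioning event and summing over all admissible $(L_1, \dots, L_r)$ yields the claim.

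As in the one-point case, the only genuinely delicate point is the independence of the coloring from the set of positions $S_t$, which is precisely what Proposition \ref{prop:MalColPreserve} supplies; the remainder is bookkeeping. I would finally note that the lower constraints $L_i \ge m_i$ are automatic, since a count of color-$\le K$ particles among the $L_i$ right-most positions cannot exceed $L_i$, so any terms violating them contribute zero and may be freely dropped from the sum.
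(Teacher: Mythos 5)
Your proof is correct and follows essentially the same route as the paper's: couple the two single-species processes through the multi-species ASEP started from the Mallows coloring, invoke Proposition \ref{prop:MalColPreserve} to identify the colors on $S_t$ as a Mallows coloring independent of $S_t$, condition on the nested counts $\{h_{t,S}(x_i)=L_i\}$, and recognize the conditional color-counting event as $\mathbf{f}_{\N,K}(L_1,[L_1+1;L_2],\dots,[L_{r-1}+1;L_r])=(m_1,m_2-m_1,\dots,m_r-m_{r-1})$. If anything, you spell out more explicitly than the paper the two points it leaves implicit --- the independence of the coloring from the geometry of $S_t$ and the automatic constraints $L_i\le L_{i+1}$, $L_i\ge m_i$ --- which is a welcome addition rather than a deviation.
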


\begin{proof}

We use the notations and the construction from the proof of Theorem \ref{th:One-Point-Coloring}. We are now interested in positions occupied by particles of a process that started from $\hat S_K$. By Proposition \ref{prop:MalColPreserve}, the joint distribution of these positions can be obtained by considering the Mallows coloring of the set $S_t$ and taking the distribution of particles with colors from 1 to $K$ in this coloring. Next, we consider the event that $L_1$ positions from $S_t$ are to the right of $x_1$, $\ldots$ , $L_r$ positions from $S_t$ are to the right of $x_r$. Then the probability that $m_1$ of the first $L_1$ positions are occupied by particles with colors from 1 to $K$, $m_2-m_1$ of the positions from $[ L_1+1; L_2 ]$ are also occupied by particles of colors from 1 to $K$, and so on, is given by 
$$
\mathrm{Prob} \left( \mathbf{f}_{\N, K} (L_1, [L_1+1;L_2], \dots, [L_{r-1}+1;L_r] ) = (m_1,m_2-m_1, \dots, m_r-m_{r-1}) \right),$$ 
which concludes the proof. 

\end{proof} 

\section{Asymptotics of ASEP with step initial condition}

\subsection{Discrete Hermite ensemble}

Let $H_n (x)$ be Hermite polynomials with (the most standard) normalization condition that their leading coefficient is equal to $2^n$. Define a function on pairs of nonnegative integers $(x,y)$ via
$$
K_{dH(r)} (x,y) := \left( \pi 2^{x+y} x! y! \right)^{-1/2} \int_{r}^{+\infty} H_x (t) H_y(t) \exp( -t^2) dt,
$$
where $r \in \R$ is a parameter. 

The determinantal point process with this kernel is called the \textit{discrete Hermite ensemble}. It was introduced in \cite{BO1}, see also \cite{BO2}. Let $\mathcal P_{dH(r)} = \left( p_1,p_2, \dots \right)$, $p_i \in \Z_{\ge 0}$, be the random point configuration of this ensemble. Define

\begin{equation*}
\mathcal{L}_{dH (r)}^{(q)} (z) := \mathbf{E} \prod_{p_i} \frac{1}{1+z q^{p_i}}, \qquad z \in \C \backslash \{ - q^{\Z_{\le 0}} \}, \qquad 0 \le q <1.
\end{equation*}

It will be useful for us to prove the following lemma.
\begin{lemma}
\label{lem:partic-case}
For any $r \in \R$ and $0 \le q<1$, we have
$$
\mathbf{E} \left( \sum_{p \in \mathcal P_{dH(r)} } q^p \right) = \frac{1}{\sqrt{\pi (1-q^2)}} \int_{r}^{+\infty} \exp \left( -t^2 \frac{1-q}{1+q} \right) dt.
$$
\end{lemma}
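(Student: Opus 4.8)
The plan is to reduce the computation to a linear statistic of the determinantal point process and then recognize the resulting sum as the diagonal specialization of Mehler's formula. First I would use the fact that for a determinantal point process with correlation kernel $K$ the first correlation function (intensity) is the diagonal $K(x,x)$, so that for any summable $g$ one has
$$
\mathbf{E} \left( \sum_{p \in \mathcal P_{dH(r)}} g(p) \right) = \sum_{x \ge 0} g(x) \, K_{dH(r)}(x,x).
$$
Applying this with $g(x) = q^x$ and writing out the diagonal of the kernel,
$$
K_{dH(r)}(x,x) = \frac{1}{\sqrt{\pi}\, 2^x x!} \int_{r}^{+\infty} H_x(t)^2 \exp(-t^2)\, dt,
$$
the left-hand side of the lemma becomes $\sum_{x \ge 0} \frac{q^x}{\sqrt{\pi}\, 2^x x!} \int_{r}^{+\infty} H_x(t)^2 \exp(-t^2)\, dt$.

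Next I would interchange the summation over $x$ with the integral over $t$. Since $q \ge 0$, $H_x(t)^2 \ge 0$, and $\exp(-t^2) > 0$, every term is nonnegative, so the exchange is justified by Tonelli's theorem, with no separate convergence estimate needed beyond finiteness of the final answer. This reduces the problem to evaluating the inner sum $\sum_{x \ge 0} \frac{q^x}{2^x x!} H_x(t)^2$, which is exactly the diagonal ($x=y=t$, parameter $u=q$) case of Mehler's formula for Hermite polynomials normalized with leading coefficient $2^n$:
$$
\sum_{x \ge 0} \frac{q^x}{2^x x!} H_x(t)^2 = \frac{1}{\sqrt{1-q^2}} \exp\left( \frac{2 q\, t^2}{1+q} \right).
$$

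Finally I would substitute this back and combine the exponentials, using $\frac{2q}{1+q} - 1 = -\frac{1-q}{1+q}$, so that the integrand reduces to $\frac{1}{\sqrt{\pi(1-q^2)}} \exp\!\left(-t^2 \frac{1-q}{1+q}\right)$, which is precisely the claimed right-hand side. The only genuine content here is the appeal to Mehler's formula; identifying the intensity with $K(x,x)$, the Tonelli interchange, and the algebraic simplification of the exponent are all routine, so I do not anticipate a serious obstacle. As a consistency check one can let $r \to -\infty$: the point process degenerates to all of $\mathbb{Z}_{\ge 0}$, the left side becomes $\sum_{p \ge 0} q^p = (1-q)^{-1}$, and the Gaussian integral on the right evaluates to the same value, confirming the normalization.
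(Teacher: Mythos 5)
Your proposal is correct and follows essentially the same route as the paper: both reduce the expectation to $\sum_{n\ge 0} q^n K_{dH(r)}(n,n)$ via the first correlation function, interchange the sum with the integral, and evaluate the inner sum by Mehler's formula for Hermite polynomials (the paper cites it in its two-variable form and specializes, exactly as you do on the diagonal). Your Tonelli justification of the interchange and the $r\to-\infty$ consistency check are small refinements of the paper's terser ``justification is straightforward,'' but the argument is the same.
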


\begin{proof}
By the definition of correlation functions and determinantal point processes, we need to compute
the first $q$-moment of the first correlation function $\rho_1$:
\begin{equation}
\label{eq:HSS}
\sum_{n\ge 0} \rho_1 (n) q^n = \sum_{n \ge 0} \left( n! 2^n \sqrt{\pi} \right)^{-1} q^n \int_{r}^{+\infty} H_n (t)^2 \exp(-t^2) dt.
\end{equation}

A well-known formula for the generating series of Hermite polynomials (see, e.g., \cite[Ch. 10.13, (22)]{BE}) reads
\begin{equation}
\label{eq:HgenS}
\sum_{n=0}^{\infty} \frac{z^n}{2^n n!} H_n (x) H_n (y) = \frac{1}{\sqrt{1-z^2}} \exp \left( \frac{2xyz - (x^2+y^2) z^2}{1-z^2} \right),
\end{equation}
Plugging \eqref{eq:HgenS} into \eqref{eq:HSS} and swapping summation and integration (justification is straightforward), we obtain the statement of the lemma.
\end{proof}

\subsection{Single-species ASEP with step initial condition}

In this section we collect known results about ASEP with the step initial condition: This means that at the beginning of the process particles occupy all non-positive integers (and only them).  
We denote by $h_t (x)$ the height function of this process, defined as in \eqref{eq:heightAsepDef}. 

For a random variable $\xi$ which takes values in $\Z_{\ge 0}$, define its $q$-Laplace transform via
$$
L_{\xi}^{(q)} (z) := \mathbf{E} \left( \prod_{i \in \Z_{\ge 0}} \frac{1}{1+z q^{\xi+i}} \right) = \sum_{n \ge 0} \frac{(-z)^n \mathbf{E} \left( q^{n \xi} \right)}{(1-q)(1-q^2)\dots (1-q^n)}, \qquad z \in \C \backslash \{ - q^{\Z_{\le 0}} \}.
$$

The following proposition is \cite[Proposition 11.1]{BO2}. Note that its derivation in \cite{BO2} is based on certain connections with a theory of symmetric functions and does not use the technique of pioneering works of Tracy-Widom \cite{TW1}, \cite{TW2}.

\begin{proposition}
\label{prop:dH-BO}
For $q \in [0;1)$ as in the definition of ASEP, and any $r \in \R$, we have
\begin{equation*}
h_t \left( (1-q)t - r \sqrt{2(1-q) t} \right) \xrightarrow[t \to \infty]{d} \xi_r,
\end{equation*}
where $\xi_r$ is the random variable which takes values in $\Z_{\ge 0}$ ; its distribution is uniquely determined by the equality
\begin{equation}
\label{eq:dHmatch}
L_{\xi_r}^{(q)} (z) = \mathcal{L}_{dH (r)}^{(q)} (z), \qquad z \in \C \backslash \{ - q^{\Z_{\le 0}} \}.
\end{equation}
\end{proposition}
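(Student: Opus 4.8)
The plan is to characterize the law of $h_t(x)$ through its $q$-Laplace transform and then pass to the limit. The starting point is that, since $h_t(x)$ is $\Z_{\ge 0}$-valued, its distribution is uniquely determined by the collection of $q$-moments $\mathbf{E}(q^{n h_t(x)})$, $n\ge 1$: these are the values at $u=q^n$ of the probability generating function $u\mapsto \sum_k \mathrm{Prob}(h_t(x)=k)\,u^k$, which is analytic on the unit disk, and the points $q^n$ accumulate at $0$, so the identity theorem forces uniqueness. Equivalently, $L^{(q)}_{h_t(x)}$ determines the law, and pointwise convergence of $q$-Laplace transforms (together with the $n=1$ estimate, which controls escape of mass toward $+\infty$ where $q^k\to 0$) is equivalent to convergence in distribution; this is the standard ``method of $q$-moments.'' Hence it suffices to prove $L^{(q)}_{h_t(x_t)}(z)\to\mathcal{L}^{(q)}_{dH(r)}(z)$ for $x_t=(1-q)t-r\sqrt{2(1-q)t}$ and every admissible $z$, where the right-hand side is the multiplicative functional $\det\!\big(I+g_z K_{dH(r)}\big)_{\ell^2(\Z_{\ge 0})}$ of the determinantal discrete Hermite ensemble, with $g_z(p):=\frac{1}{1+zq^p}-1$. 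One should also record that this limit is genuinely the $q$-Laplace transform of an honest $\Z_{\ge 0}$-valued law $\xi_r$, which is part of the discrete Hermite ensemble theory of \cite{BO1,BO2} and is what makes \eqref{eq:dHmatch} well-posed.

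Next I would invoke the exact finite-$t$ formula for the step-ASEP height-function distribution coming from the symmetric-function / $q$-Whittaker approach referenced in the text (and bypassing Tracy-Widom, as the paper notes), which expresses $L^{(q)}_{h_t(x)}(z)$ as a Fredholm determinant of an explicit kernel built from a double contour integral. Here $(1-q)$ is the hydrodynamic speed of the front of the step profile, so $x_t=(1-q)t-r\sqrt{2(1-q)t}$ probes the front at a spatial offset of order $\sqrt{t}$; in this window the number of particles lying to the right of $x_t$ stays of order one and is governed by a determinantal point process on $\Z_{\ge 0}$, with the offset $r$ entering as the lower limit of the integral defining the kernel. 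The Gaussian nature of the front fluctuations at this diffusive scale (rather than the $t^{1/3}$ Tracy-Widom regime) is exactly what makes the Hermite kernel, as opposed to the Airy kernel, the natural candidate for the limit.

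The core of the argument is then a steepest-descent analysis of the finite-$t$ kernel along its contours. I would locate the relevant critical point of the phase, rescale the integration variables by $\sqrt{t}$ about it, and show that the phase becomes quadratic, producing the Gaussian weight $\exp\!\big(-t^2\tfrac{1-q}{1+q}\big)$ and the Hermite polynomials $H_x(t)H_y(t)$ that assemble into $K_{dH(r)}$. The generating identity \eqref{eq:HgenS} used in Lemma \ref{lem:partic-case} is precisely the quadratic-phase Hermite resummation that should emerge, giving a concrete target to match; indeed the $z^1$ coefficient of $\mathcal{L}^{(q)}_{dH(r)}$ is already pinned down by Lemma \ref{lem:partic-case}, which serves as a consistency check at the lowest order. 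To upgrade pointwise kernel convergence to convergence of the Fredholm determinants I would establish a uniform trace-class (Hadamard-type) domination along the rescaled contours, justifying the interchange of the limit with the determinant.

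The hard part will be exactly this last analytic step: obtaining steepest-descent estimates uniform enough to dominate the Fredholm expansion and, simultaneously, identifying the limiting kernel \emph{exactly} as $K_{dH(r)}$ rather than merely up to a harmless conjugation or as some other incarnation of the Gaussian edge. Controlling the tails of the contours so that no mass escapes, and certifying that the discrete Hermite kernel is the genuine limit, is where the real work lies. Once the determinant convergence is secured, the uniqueness of the $q$-Laplace transform established in the first paragraph immediately yields $h_t(x_t)\xrightarrow{d}\xi_r$ together with the characterization \eqref{eq:dHmatch}.
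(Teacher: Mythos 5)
A point of comparison first: the paper does not prove Proposition~\ref{prop:dH-BO} at all --- it is imported verbatim from \cite[Proposition 11.1]{BO2}, so the relevant benchmark is the argument given there. That argument runs along a different route from yours: \cite{BO2} first establish an \emph{exact finite-time} identity expressing $L^{(q)}_{h_t(x)}(z)$ as the expectation of the multiplicative functional $\prod_i \left(1+zq^{p_i}\right)^{-1}$ over a determinantal ensemble on $\Z_{\ge 0}$ built from Laguerre polynomials (the discrete Laguerre ensemble), obtained via symmetric-function machinery and explicitly \emph{not} via Tracy--Widom-type double-contour Fredholm determinants --- the sentence preceding the proposition in the present paper flags exactly this. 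The asymptotics then reduce to the classical orthogonal-polynomial transition from Laguerre to Hermite kernels under the scaling $x=(1-q)t-r\sqrt{2(1-q)t}$: pointwise kernel convergence combined with the summable decay of $g_z(p)=\frac{1}{1+zq^p}-1$ (which decays like $q^p$, so the multiplicative functional is continuous with respect to this convergence), rather than a steepest-descent analysis of oscillatory contour integrals.

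Second, the gap: as written, your proposal is a program, not a proof. Your first paragraph (uniqueness of the law from the $q$-moments $\mathbf{E}\,q^{n h_t(x)}$ via analyticity of the generating function on the unit disk, and the tightness remark controlling escape of mass to $+\infty$) is sound and matches the standard reduction. But everything after that is deferred: you never write down the finite-$t$ formula you intend to start from, never locate the critical point or carry out the $\sqrt{t}$-rescaling, never prove the uniform Hadamard/trace-class domination you correctly identify as necessary, and you explicitly flag the identification of the limiting kernel with $K_{dH(r)}$ as ``where the real work lies.'' Lemma~\ref{lem:partic-case} pins down only the coefficient of $z^1$ in $\mathcal{L}^{(q)}_{dH(r)}$, so it serves as a consistency check, not a substitute for kernel convergence. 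Thus the decisive analytic content of Proposition~\ref{prop:dH-BO} --- precisely the part that \cite{BO2} supplies --- is absent from your argument. If you do pursue the contour-integral route, note that you would be re-deriving the result by the Tracy--Widom-adjacent technology that the quoted derivation deliberately bypasses; the shorter path is that of \cite{BO2}, where the finite-time law is already a multiplicative functional of a determinantal ensemble and the limit is a classical Laguerre-to-Hermite kernel transition.
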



The following theorem is due to \cite{ACQ}, \cite{BG}, \cite{SS}, see also \cite[Theorem 11.6]{BO2}.

\begin{theorem}
\label{th:KPZ-step}
Let $q=1-\eps$, $t = \hat t \eps^{-4}$, $x= \hat x \eps^{-3}$, and assume that $\hat x / \hat t \in [0;1)$. Then 
$$
\hat \sigma \eps^{-2} - \ln (\eps) - \eps h_t (x) \xrightarrow[\eps \to 0]{} \hat \xi,  \qquad \mbox{with $\hat \sigma= \dfrac{(\hat t - \hat x)^2}{4 \hat t}$},
$$
where the convergence is in distribution, and $\hat \xi$ is a non-trivial random variable closely related to narrow wedge solution of the KPZ equation, see, e.g., \cite[Remark 11.7]{BO2} for more detail. 
\end{theorem}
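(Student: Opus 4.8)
The result is a well-established ingredient quoted from \cite{ACQ,BG,SS}, so the task is to reproduce one of the standard routes rather than to invent a new argument; the plan is to follow the exact-formula approach, which is closest to the toolkit already developed in this paper. The starting point is the closed-form expression for the pre-limit $q$-Laplace transform $L^{(q)}_{h_t(x)}(z) = \E\prod_{i\in\Z_{\ge0}}(1+zq^{h_t(x)+i})^{-1}$ of the step-initial-condition height function. This is precisely the quantity whose $t\to\infty$ asymptotics underlies Proposition \ref{prop:dH-BO}, and before any limit it admits a nested-contour / Fredholm-determinant representation (the Tracy--Widom formula, equivalently the symmetric-function formula of \cite{BO2}). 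Everything would be extracted from the $\eps\to0$ asymptotics of this determinant.

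The first step is to fix the centering and the scale of the spectral parameter. Writing $q^{h_t(x)} = \exp(h_t(x)\ln(1-\eps))$ and expanding $\ln(1-\eps) = -\eps - \tfrac{\eps^2}{2} - \cdots$, I would substitute $t = \hat t\eps^{-4}$, $x = \hat x\eps^{-3}$ and read off the macroscopic behaviour of $h_t(x)$ from the hydrodynamic (law-of-large-numbers) profile of step ASEP, which in this window is the parabola producing the constant $\hat\sigma = (\hat t - \hat x)^2/(4\hat t)$. One must be careful here: since $h_t(x) = O(\eps^{-3})$, the quadratic term $\tfrac{\eps^2}{2}h_t(x) = O(\eps^{-1})$ is of the same order as the fluctuations, so subleading terms in the expansion of $\ln(1-\eps)$ feed into both the deterministic shift and the definition of $\hat\xi$; tracking these constants is what produces the $\hat\sigma\eps^{-2}$ and $-\ln\eps$ terms in the statement. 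Choosing the spectral parameter $z = z(\eps)$ of the form $\zeta\,\eps^{-1}\exp(c\,\eps^{-2} + c'\,\eps^{-1})$, with $c,c'$ fixed by this matching (so that $c$ is essentially $\hat\sigma$) and $\zeta>0$ a free variable, makes $z\,q^{h_t(x)}$ concentrate at the order-one quantity $\zeta e^{\hat\xi}$, so that in the limit $L^{(q)}_{h_t(x)}(z)$ should converge to $\E[\exp(-\zeta e^{\hat\xi})]$.

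The analytic heart is then a steepest-descent analysis of the Fredholm determinant under this scaling: locate the relevant (double) critical point of the exponent, deform the contours through it, and show that the rescaled kernel converges to the crossover kernel of \cite{ACQ} (built from the Airy function), with uniform tail bounds justifying convergence of the determinant itself. The limiting transform $\E[\exp(-\zeta e^{\hat\xi})]$, viewed as a function of $\zeta$, is exactly the Laplace transform that \cite{ACQ} prove characterizes the one-point law of the narrow-wedge solution of the KPZ equation; inverting it identifies $\hat\xi$ and yields the claimed convergence in distribution.

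I expect the main obstacle to be the uniformity in the saddle-point estimates --- bounding the kernel and the Fredholm determinant by an $\eps$-independent integrable majorant so that dominated convergence applies --- together with the bookkeeping of constants through the $\ln(1-\eps)$ expansion, where a subleading term of the same order as the fluctuation must be separated cleanly. An alternative, logically independent route is the Bertini--Giacomin method \cite{BG}: form the microscopic Cole--Hopf transform $Z_\eps$ of $h_t(x)$, show it solves a discrete stochastic heat equation driven by a martingale whose bracket converges to space-time white noise, prove tightness, and identify every subsequential limit with the unique solution of $\partial_T Z = \tfrac12\partial_X^2 Z + Z\,\dot W$ with narrow-wedge initial data; there the difficulty migrates to the singular delta initial condition and to the convergence of the discrete noise. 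Either way, since the theorem is quoted from \cite{ACQ,BG,SS}, within this paper it suffices to invoke those references.
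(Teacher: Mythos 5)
The paper offers no proof of this theorem: it is imported from \cite{ACQ}, \cite{BG}, \cite{SS} (see also \cite[Theorem 11.6]{BO2}), and your closing observation that within this paper it suffices to invoke those references is exactly the paper's approach, so your proposal matches it; the sketch of the two standard routes is a fair outline of what those references do. One slip in your bookkeeping is worth noting: with your choice $z=\zeta\,\eps^{-1}\exp\left(c\,\eps^{-2}+c'\,\eps^{-1}\right)$ the quantity $zq^{h_t(x)}$ concentrates at order one, which makes $\prod_{i\ge 0}\left(1+zq^{h_t(x)+i}\right)^{-1}=\exp\left(-\eps^{-1}\int_0^\infty \ln\left(1+zq^{h_t(x)}e^{-s}\right)ds\,(1+o(1))\right)\to 0$, a degenerate limit; to obtain $\mathbf{E}\left[\exp\left(-\zeta e^{\hat\xi}\right)\right]$ one needs $zq^{h_t(x)}$ of order $\eps$ (since $\sum_{i\ge0}\ln(1+\eps u q^i)\to u$), i.e.\ the prefactor $\eps^{-1}$ should be absent --- precisely the kind of constant-tracking you yourself flagged as the main hazard.
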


\section{Asymptotics}

In this section we present the asymptotic applications of the Mallows coloring. In the first subsection, we address the asymptotics of the fixed amount of particles in ASEP (and also fixed $q$). In the second one, we study the KPZ-type limit, when the number of partciles goes to infinity and $q$ goes to 1. 

\subsection{Fixed amount of particles}

In the notations of Section \ref{sec:mallows-coloring}, let $S = \Z_{\le 0}$, and let $S_K$ be the random subset of $\Z_{\le 0}$ that is occupied by numbers $1,2, \dots, K$ from the Mallows coloring of $S$. In this section we study (a single-species) ASEP with $K$ particles which start at positions $S_K$. Recall that $h_{t,S_K} (x)$ denotes the number of particles in this ASEP which are weakly to the right of $x$ at time $t$. 

\begin{theorem}
\label{th:main-fin}
Let $K \in \Z_{\ge 1}$ be fixed. For any $r \in \R$ and $s \in \{0,1,\dots, K\}$, we have
\begin{multline*}
\lim_{t \to \infty} \mathrm{Prob} \left( h_{t,S_K} \left( (1-q)t + r \sqrt{2(1-q) t} \right) = s \right)
\\ = \sum_{L \ge s} \mathrm{Prob} \left( \xi_{-r} = L \right) q^{(K-s)(L-s)} \frac{\prod_{i=0}^{s-1} (1-q^{K-i}) (1-q^{L-i})}{\prod_{i=1}^s (1-q^i)},
\end{multline*}
where $\xi_{-r}$ is the uniquely determined by \eqref{eq:dHmatch} random variable, and the equality is in distribution.  
\end{theorem}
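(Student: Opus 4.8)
The plan is to obtain the result by taking the large-time limit in the \emph{exact} distributional identity already supplied by Theorem \ref{th:One-Point-Coloring}, which has done all of the combinatorial work. Since here $S = \Z_{\le 0}$ and $S_K$ is precisely the set $\hat S_K$ of positions carrying colors $1,\dots,K$ in the Mallows coloring of $S$, Theorem \ref{th:One-Point-Coloring} applies verbatim. Writing $x_t := (1-q)t + r\sqrt{2(1-q)t}$ and substituting the explicit formula \eqref{eq:MalHeightInf} for $\mathrm{Prob}(f_{\N,K,L}=s)$, the identity reads, for every fixed $t$,
\begin{equation*}
\mathrm{Prob}\left(h_{t,S_K}(x_t)=s\right) = \sum_{L \ge s}\mathrm{Prob}\left(h_{t,S}(x_t)=L\right)\, c_{K,L,s}, \qquad c_{K,L,s} := q^{(K-s)(L-s)}\frac{\prod_{i=0}^{s-1}(1-q^{K-i})(1-q^{L-i})}{\prod_{i=1}^s(1-q^i)},
\end{equation*}
where each coefficient $c_{K,L,s}$ is deterministic and, being a probability by \eqref{eq:MalHeightInf}, satisfies $0 \le c_{K,L,s}\le 1$; note that $s\le\min(K,L)$ holds throughout the sum (as $s\le K$ and $L\ge s$), so the formula is always valid.

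Next I would identify $h_{t,S}$ with the step-initial-condition ASEP, since $S=\Z_{\le 0}$ is exactly the step configuration. Proposition \ref{prop:dH-BO}, applied with $r$ replaced by $-r$, then yields the one-dimensional convergence $h_{t,S}(x_t)\xrightarrow[t\to\infty]{d}\xi_{-r}$. Because both sides take values in the discrete set $\Z_{\ge 0}$, this is equivalent to pointwise convergence of mass functions, $\mathrm{Prob}(h_{t,S}(x_t)=L)\to\mathrm{Prob}(\xi_{-r}=L)$ for each fixed $L$. Granting the interchange of limit and summation, the right-hand side of the displayed identity converges to $\sum_{L\ge s}\mathrm{Prob}(\xi_{-r}=L)\,c_{K,L,s}$, which is exactly the claimed expression.

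The only genuinely analytic point, and the \emph{main obstacle}, is justifying passage of the limit through the infinite sum over $L$. This requires care because the coefficients $c_{K,L,s}$ do not decay as $L\to\infty$ in general: when $K=s$ one checks that $c_{s,L,s}=\prod_{i=0}^{s-1}(1-q^{L-i})\to 1$, so there is no summable dominating bound and term-by-term passage cannot be invoked naively. Instead I would use the uniform integrability furnished by tightness. A weakly convergent family of $\Z_{\ge 0}$-valued random variables is tight, so for any $\eps>0$ there is an $M$ with $\mathrm{Prob}(h_{t,S}(x_t)>M)<\eps$ for all large $t$ and $\mathrm{Prob}(\xi_{-r}>M)<\eps$. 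Using $0\le c_{K,L,s}\le 1$, the tail $\sum_{L>M}\mathrm{Prob}(h_{t,S}(x_t)=L)\,c_{K,L,s}$ and its limiting counterpart are each bounded by $\eps$, so the full sum is uniformly approximated by its truncation to $s\le L\le M$. On this finite range, pointwise convergence of the mass functions gives convergence of the truncated sum; letting $\eps\to 0$ completes the proof and produces precisely the stated right-hand side.
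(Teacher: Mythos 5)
Your proposal is correct and follows essentially the same route as the paper: the exact identity of Theorem \ref{th:One-Point-Coloring} combined with the explicit formula \eqref{eq:MalHeightInf} and the one-point convergence of Proposition \ref{prop:dH-BO} (with $r$ replaced by $-r$). The only difference is that you spell out the interchange of the $t\to\infty$ limit with the sum over $L$ via tightness and truncation --- a justification the paper's two-line proof leaves implicit, and your observation that the coefficients $c_{K,L,s}$ need not decay in $L$ correctly identifies why a naive dominated-convergence argument would not suffice.
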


\begin{proof}
By Proposition \ref{prop:dH-BO} the height function $h_{t,S} \left( (1-q)t + r \sqrt{2(1-q) t} \right)$ for step initial condition converges in our scaling to the random variable $\xi_{-r}$. Applying Theorem \ref{th:One-Point-Coloring} and also \eqref{eq:MalHeightInf}, we arrive at the statement of the theorem. 

\end{proof}

\begin{corollary}
Let $P_1(t;K)$ be the position of the right-most particle of the $K$ particles ASEP started from $S_K$. We have
$$
\mathrm{Prob} \left( \frac{P_1(t;K) - (1-q)t}{\sqrt{2 (1-q) t} } \le r \right) = \mathbf{E} \left( q^{K \xi_{-r} } \right),
$$
where the expectation in the right-hand side is taken with respect to the distribution of the random variable $\xi_{-r}$.
\end{corollary}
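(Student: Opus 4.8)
The plan is to connect the event about the right-most particle $P_1(t;K)$ to the height function $h_{t,S_K}$ and then apply Theorem \ref{th:main-fin}. The key observation is that the right-most of the $K$ particles lies weakly to the right of a point $y$ if and only if at least one particle is weakly to the right of $y$, i.e. $P_1(t;K) \ge y$ is equivalent to $h_{t,S_K}(y) \ge 1$. Contrapositively, $P_1(t;K) < y$ means $h_{t,S_K}(y) = 0$. First I would fix the scaling $y = y(r) := (1-q)t + r\sqrt{2(1-q)t}$ so that the event $\{P_1(t;K) \le r \text{ after centering/scaling}\}$ matches up (taking the usual care with strict versus weak inequalities, which washes out in the continuous limit) with the event $\{h_{t,S_K}(y(r)) = 0\}$.

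Next I would compute $\lim_{t\to\infty}\mathrm{Prob}(h_{t,S_K}(y(r)) = 0)$ by specializing Theorem \ref{th:main-fin} to $s = 0$. Setting $s=0$ in the formula there, the factor $\prod_{i=0}^{s-1}(1-q^{K-i})(1-q^{L-i})$ becomes an empty product equal to $1$, the denominator $\prod_{i=1}^s(1-q^i)$ is likewise empty and equal to $1$, and the power $q^{(K-s)(L-s)}$ collapses to $q^{KL}$. Hence
\begin{equation*}
\lim_{t\to\infty}\mathrm{Prob}\left(h_{t,S_K}(y(r)) = 0\right) = \sum_{L \ge 0} \mathrm{Prob}(\xi_{-r} = L)\, q^{KL} = \mathbf{E}\left(q^{K\xi_{-r}}\right),
\end{equation*}
where the last equality is just the definition of the expectation. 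This already produces the right-hand side of the corollary.

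It then remains to assemble the two pieces. I would write, for the scaled and centered position, $\mathrm{Prob}\left(\tfrac{P_1(t;K) - (1-q)t}{\sqrt{2(1-q)t}} \le r\right) = \mathrm{Prob}(P_1(t;K) \le y(r))$, identify this with $\mathrm{Prob}(h_{t,S_K}(y(r)^{+}) = 0)$ via the equivalence above, and pass to the limit using the $s=0$ computation. The main obstacle is purely bookkeeping rather than conceptual: one must be careful about the boundary convention in the height function \eqref{eq:heightAsepDef} (which counts $y \ge x$, so $h_{t,S_K}(x)=0$ means every particle is strictly left of $x$) and about whether $P_1 \le r$ should be a weak or strict inequality. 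Since the limiting law of $\xi_{-r}$ is continuous in $r$ and the rescaled position concentrates on a macroscopic interval, the discrepancy between weak and strict inequalities contributes zero in the limit, so the distributional identity holds as stated. I would close by noting that the convergence is in distribution, inherited directly from the distributional convergence in Theorem \ref{th:main-fin}.
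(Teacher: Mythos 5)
Your proposal is correct and follows essentially the same route as the paper: the paper's proof consists precisely of identifying $\bigl\{P_1(t;K) \le (1-q)t + r\sqrt{2(1-q)t}\bigr\}$ with $\bigl\{h_{t,S_K}\bigl((1-q)t + r\sqrt{2(1-q)t}\bigr) = 0\bigr\}$ and then invoking Theorem \ref{th:main-fin}. Your explicit $s=0$ specialization (empty products equal to $1$, exponent collapsing to $q^{KL}$, yielding $\mathbf{E}\bigl(q^{K\xi_{-r}}\bigr)$) and your care with the weak-versus-strict boundary convention, which indeed washes out in the $t\to\infty$ limit, only make explicit what the paper leaves implicit.
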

\begin{proof}
One has
$$
\mathrm{Prob} \left( \frac{P_1(t;K) - (1-q)t}{\sqrt{2 (1-q) t} } \le r \right) = \mathrm{Prob} \left( h_{t,S_K} \left( (1-q)t + r \sqrt{2(1-q) t} \right) = 0 \right).
$$
After that, it remains to apply Theorem \ref{th:main-fin}. 
\end{proof}

\begin{remark}

For a consistency check, let us consider the (trivial) case $K=1$. By \eqref{eq:dHmatch} we have $\mathbf{E} q^{\xi_{-r}} = (1-q) \mathbf{E} \sum_{p \in \mathcal{P}_{dH(r)}} q^p$. Applying Lemma \ref{lem:partic-case}, we see that the limiting distribution of the only particle in the process after subtracting the drift and scaling by $\sqrt{t}$ is the centered Gaussian random variable with variance $(1+q)$, as expected.

\end{remark}

\begin{remark}
Theorem \ref{th:main-fin} provides an explicit formula for fluctuations of any particle in ASEP with fixed amount of particles for an arbitrary initial condition. Indeed, it is clear that any deterministic initial condition leads to the same fluctuations (this can be proven, e.g., via monotonicity). For fixed $q$, the randomness involved in the definition of our initial condition leads only to shifts of finite order, therefore ASEP started with it converges to the same distribution. 
\end{remark}

\begin{remark}
One can also provide a multi-point distributional identity between $h_{t,S_K}$ and $h_{t,S}$ with the use of Theorem \ref{th:Many-Point-Coloring} instead of Theorem \ref{th:One-Point-Coloring} in the proof of Theorem \ref{th:main-fin}.
\end{remark}

\subsection{KPZ type asymptotics}

We start with the analysis of the distribution $f_{\N,K,L} (s)$ in the KPZ type limit regime.

\begin{proposition}
\label{prop:KPZ-asymp}
Let $\hat \sigma \in \R$. Assume that there exist $c,d \in \R$, such that
$$
K=\hat \sigma \eps^{-3} -\ln( \eps) \eps^{-1} - c \eps^{-1} + o \left( \eps^{-1} \right) , \ \  L= \hat \sigma \eps^{-3} -\ln( \eps) \eps^{-1} - d \eps^{-1} + o \left( \eps^{-1} \right), 
\ \ q=1-\eps,
$$ 
and let $\tilde s (\eps)$ be the random variable chosen according to the distribution $f_{\N,K,L}$.

Then
$$
\left( L - \tilde s (\eps) \right) \eps \xrightarrow[\eps \to 0]{} \ln \left( 1 + e^{c-d} \right),
$$
where the convergence is in probability and is uniform over $d$ varying in a compact subset of a real line.
\end{proposition}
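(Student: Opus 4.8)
The plan is to work directly from the explicit formula \eqref{eq:MalHeightInf} for $P(s) := \mathrm{Prob}(f_{\N,K,L}=s)$ and to locate the concentration point of $\tilde s(\eps)$ by a ratio (mode) analysis, then upgrade the mode computation to a genuine concentration statement via geometric tail bounds. The central object is the ratio of consecutive probabilities, which a direct simplification of \eqref{eq:MalHeightInf} yields as
\[
\frac{P(s)}{P(s-1)} = q^{-(K+L-2s+1)}\,\frac{(1-q^{K-s+1})(1-q^{L-s+1})}{1-q^s}.
\]
The distribution is unimodal and its mode sits where this ratio crosses $1$.

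\textbf{Scaling limit of the ratio and location of the mode.} Substituting $q=1-\eps$, I would parametrize the relevant window by $\gamma := (L-s)\eps$, expecting $\gamma=\Theta(1)$. Using $K-s=(\gamma+d-c)\eps^{-1}+o(\eps^{-1})$, $L-s=\gamma\eps^{-1}$, and $\ln(1-\eps)=-\eps+O(\eps^2)$, one checks that near the mode $K+L-2s+1$ is only of order $\eps^{-1}$ (the leading $\hat\sigma\eps^{-3}$ terms cancel in $K+L-2s$), while $q^s\to 0$ because $s=\Theta(\eps^{-3})$ (or $\Theta(\eps^{-1}\ln\eps^{-1})$ when $\hat\sigma=0$), so $1-q^s\to 1$. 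Writing $u=e^{-\gamma}$ and $v=e^{c-d}$, the ratio converges to $\phi(\gamma):=(1-vu)(1-u)/(vu^2)$, uniformly for $\gamma$ and $c-d$ in compact sets, since every error term is $O(\eps)$ uniformly. Clearing denominators in $\phi(\gamma)=1$ collapses to $1-u(1+v)=0$, i.e. $u=(1+v)^{-1}$, that is $\gamma^\ast=\ln(1+e^{c-d})$, matching the claim. Moreover $\phi(\gamma)>1\Leftrightarrow u<(1+v)^{-1}\Leftrightarrow\gamma>\gamma^\ast$, so $\phi$ is strictly increasing through the value $1$ at $\gamma^\ast$; equivalently $P(s)$ increases for $s<s^\ast:=L-\gamma^\ast\eps^{-1}$ and decreases for $s>s^\ast$, confirming that the mode is at $s^\ast$. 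This $s^\ast$ lies strictly inside the admissible range $0\le s\le\min(K,L)$, because $\gamma^\ast>\max(0,c-d)$ (indeed $\ln(1+e^{c-d})>c-d$).

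\textbf{Concentration and conclusion.} Fix $\delta>0$. Since $\phi$ is strictly increasing with $\phi(\gamma^\ast)=1$, there is $\kappa=\kappa(\delta)>0$ with $\phi\ge 1+\kappa$ for $\gamma\ge\gamma^\ast+\delta$ and $\phi\le 1-\kappa$ for $\gamma\le\gamma^\ast-\delta$, uniformly over $c-d$ in a compact set; by the uniform convergence of the pre-limit ratio the same holds for $P(s)/P(s-1)$ once $\eps$ is small. Telescoping $\ln\bigl(P(s)/P(s^\ast)\bigr)=\sum\ln\phi(\gamma_{s'})$ and linearizing $\ln\phi\approx\phi'(\gamma^\ast)(\gamma-\gamma^\ast)$ near $\gamma^\ast$ produces a Gaussian-type bound of the form $P(s)\le P(s^\ast)\exp(-C\delta^2\eps^{-1})$ once $s$ crosses $\gamma^\ast\pm\delta$, so the whole tail (a geometric series with ratio $\le(1+\kappa)^{-1}$) outside $\{|(L-s)\eps-\gamma^\ast|\le\delta\}$ has total mass at most a constant times $\exp(-C\delta^2\eps^{-1})\to 0$. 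Hence $(L-\tilde s(\eps))\eps\to\gamma^\ast=\ln(1+e^{c-d})$ in probability, and since $\kappa$, $C$, and all convergence rates were chosen uniformly in $c-d$, the convergence is uniform over $d$ in compacts.

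\textbf{Main obstacle.} The delicate point is the last step's uniform-in-$s$ tail control rather than the mode computation itself: one must pass from the pointwise limit of the ratio to a genuine uniform geometric bound across the entire window, carefully verifying that the second-order terms of $\ln(1-\eps)$ multiplied by the large quantities $K,L=\Theta(\eps^{-3})$ stay negligible — which works precisely because the $\hat\sigma\eps^{-3}$ contributions cancel in $K+L-2s$ near the mode — and that $q^s$ is genuinely negligible throughout.
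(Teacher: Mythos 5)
Your proposal is correct and follows essentially the same route as the paper's own proof: the identical consecutive-ratio formula for $P(s)/P(s-1)$, the same limiting function of $u=q^{L-s}$ with unique crossing at $u=(1+e^{c-d})^{-1}$, the same treatment of the factor $(1-q^s)^{-1}$ and the cancellation of the $\hat\sigma\eps^{-3}$ terms, and the same uniform geometric-series tail estimate (your Gaussian-type refinement merely quantifies what the paper calls a ``standard estimation of geometric series''). Incidentally, your $\phi$ is the correct limit of the pre-limit ratio \eqref{eq:ratio}: the paper's displayed \eqref{eq:ratio-parabola} has the signs of $c-d$ flipped in both exponentials (since $q^{\eps^{-1}(d-c)}\to e^{c-d}$, not $e^{d-c}$), a typo that does not propagate, as the paper's stated root $\mathbf{x}=(1+\exp(c-d))^{-1}$ agrees with your formula rather than with its own display.
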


\begin{remark} Note that the claim implies that $\tilde s (\eps)$ satisfies a Law of Large Numbers in the limit $\eps \to 0$.
\end{remark}

\begin{proof}

Let us analyze the ratio of $f_{\N,K,L} (s)$ and $f_{\N,K,L} (s-1)$, for $s=1, \dots, \min(K,L)$: 

\begin{multline*}
\frac{f_{\N,K,L} (s)}{f_{\N,K,L} (s-1)} = \frac{\binom{K}{s}_q \binom{L}{s}_q q^{(K-s)(L-s)} (q;q)_s }{\binom{K}{s-1}_q \binom{L}{s-1}_q q^{(K-s-1)(L-s-1)} (q;q)_{s-1}} 
= \frac{(1-q^{K-s+1}) (1 - q^{L-s+1})}{1-q^s} q^{-K-L +2s -1}
\\ = \frac{1}{1 - q^{L- \hat s}} \left( 1 - q^{K-L+ \hat s +1} \right) \left( 1 - q^{\hat s +1} \right) q^{L-K - 2 \hat s -1}, 
\end{multline*}
where we set $\hat s = L -s$. Plugging in the expressions from the assumption, one obtains that the expression above is equal to 
\begin{equation}
\label{eq:ratio}
\frac{1}{1 - q^{L- \hat s}} (1- q^{\eps^{-1} ( d -c )} q^{\hat s +1}) (1 - q^{\hat s +1}) q^{\eps^{-1} ( c - d )} q^{-2 \hat s -1} + o(1). 
\end{equation}
The subsequent proof is essentially the following: If we assume for a moment that the variable $\hat s$ grows in such a way that $q^{\hat s}$ converges to $\mathbf{x} \in (0;1)$, then the expression above converges to 
\begin{equation}
\label{eq:ratio-parabola}
\frac{\left( 1- \exp( d -c ) \mathbf{x} \right) (1 - \mathbf{x}) \exp \left( c - d \right)} {\mathbf{x}^{2}}.
\end{equation}
By elementary calculus, one readily proves that this expression is equal to 1 for 
$$
\mathbf{x} = \frac{1}{1+\exp(c-d)},
$$ 
strictly greater than 1 for smaller values of $\mathbf{x}$, and strictly less than 1 for larger values of $\mathbf{x}$. Via a standard estimation of geometric series such a property of the ratio $\frac{f_{\N,K,L} (s)}{f_{\N,K,L} (s-1)}$ implies that $q^{L-\tilde s (\eps)}$ converges to $\left( 1 +\exp(c-d)\right)^{-1}$ in probability.

Let us now add details. Let $\delta>0$, and assume that $\hat s > \left( \ln \left( 1 + e^{c-d} \right) +\delta \right) \eps^{-1}$. Then 
$$
q^{\hat s}= (1-\eps)^{\hat s} < (1-\eps)^{\left( \ln \left( 1 + e^{c-d} \right) +\delta \right) \eps^{-1}}< \left( 1 + e^{c-d} \right)^{-1} \exp ( - \delta ).
$$
In this regime, the left-most factor from \eqref{eq:ratio}, namely $ \left( \frac{1}{1 - q^{L- \hat s}} \right)$, converges to 1. Due to the properties of the function \eqref{eq:ratio-parabola} mentioned above, we can conclude that there exists $C>1$ such that for all sufficiently small $\eps$ one has
\begin{equation}
\label{eq:ratio-ans1}
\frac{f_{\N,K,L} (s)}{f_{\N,K,L} (s-1)} > C,
\end{equation}
for such values of $\hat s = L-s$, and that the choice of $C$ is uniform over $d$ varying in compact subsets of $\R$. 

Assume now that $\hat s < \left( \ln \left( 1 + e^{c-d} \right) - \delta \right) \eps^{-1}$. Since the factor $ \left( \frac{1}{1 - q^{L- \hat s}} \right)$ is greater than one, we conclude that that the expression \eqref{eq:ratio} can be estimated asymptotically from below by the function \eqref{eq:ratio-parabola}. Therefore, there exists $D<1$ such that for all sufficiently small $\eps$ one has
\begin{equation}
\label{eq:ratio-ans2}
\frac{f_{\N,K,L} (s)}{f_{\N,K,L} (s-1)} <D,
\end{equation}
for such values of $\hat s = L-s$, and that the choice of $D$ is uniform over $d$ varying in compact subsets of $\R$. 

Combining \eqref{eq:ratio-ans1} and \eqref{eq:ratio-ans2}, we obtain the estimate via geometric series, which establishes the (uniform in $d$ in the sense above) convergence of $q^{L-\tilde s (\eps)}$ to $\left( 1 +\exp(c-d)\right)^{-1}$ in probability.
Since $q=1-\eps$, we obtain that $\left( L-\tilde s (\eps) \right) \eps$ converges to $\ln \left( 1 + e^{c-d} \right)$ in probability, and establish the claim of the proposition. 

\end{proof}

Let $S= \mathbb{Z}_{\le 0}$, and let $\hat S_K$ be the (random) set defined in Section \ref{sec:dist-id}. Consider a single-species ASEP with $K$ particles which starts with the initial condition $\hat S_K$. As before, let $h_{t,\hat S_K}$ be the height function of this process.

\begin{theorem}
\label{th:main-KPZ}
Let $c, \hat t \in \R$, let $K= \frac{\hat t}{4} \eps^{-3} -\ln( \eps) \eps^{-1} - c \eps^{-1}$, $q=1-\eps$, and $t=\eps^{-4} \hat t$. We have
$$
\frac{\hat t}{4 \eps^2} - \ln(\eps) - \eps h_{t,\hat S_K} (0) \to \alpha,
$$
where the convergence is in distribution, and $\alpha$ is a random variable defined by 
$$
\alpha := \hat \xi + \ln \left( 1 + \exp \left( c - \hat \xi \right) \right),
$$
where $\hat \xi$ is the random variable defined in the statement of Theorem \ref{th:KPZ-step}. 
\end{theorem}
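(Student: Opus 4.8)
The plan is to combine the exact distributional identity of Theorem \ref{th:One-Point-Coloring} with the two asymptotic inputs already at our disposal: the KPZ limit of the step height function (Theorem \ref{th:KPZ-step}) and the law-of-large-numbers behavior of the Mallows height function (Proposition \ref{prop:KPZ-asymp}). Since $S = \mathbb{Z}_{\le 0}$ is exactly the step initial condition, Theorem \ref{th:One-Point-Coloring} at $x=0$ expresses the law of $h_{t,\hat S_K}(0)$ as the mixture
$$
\mathrm{Prob}\left(h_{t,\hat S_K}(0) = s\right) = \sum_{L \ge s}\mathrm{Prob}\left(h_{t,S}(0) = L\right)\,\mathrm{Prob}\left(f_{\N,K,L} = s\right).
$$
In other words, I would realize the pair $(L,\tilde s)$ so that $L := h_{t,S}(0)$ has the step-height law and, conditionally on $L$, the variable $\tilde s := h_{t,\hat S_K}(0)$ is distributed as $f_{\N,K,L}$.

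First I would pin down the parameters. Taking $x=0$ in Theorem \ref{th:KPZ-step} corresponds to $\hat x = 0$, hence $\hat\sigma = (\hat t)^2/(4\hat t) = \hat t/4$, and the theorem gives
$$
d_\eps := \eps\left(\tfrac{\hat t}{4}\eps^{-3} - \ln(\eps)\eps^{-1} - L\right) \xrightarrow[\eps \to 0]{d} \hat\xi .
$$
Equivalently, $L = \frac{\hat t}{4}\eps^{-3} - \ln(\eps)\eps^{-1} - d_\eps \eps^{-1}$, which is precisely the form of $L$ demanded in Proposition \ref{prop:KPZ-asymp} with $\hat\sigma = \hat t/4$ and $d = d_\eps$; the given $K = \frac{\hat t}{4}\eps^{-3} - \ln(\eps)\eps^{-1} - c\eps^{-1}$ matches the $K$ there with the same constant $c$. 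Writing the target quantity as
$$
\tfrac{\hat t}{4}\eps^{-2} - \ln(\eps) - \eps h_{t,\hat S_K}(0) = d_\eps + \eps\left(L - \tilde s\right),
$$
the theorem reduces to identifying the joint limit of $\left(d_\eps,\ \eps(L-\tilde s)\right)$. Conditioning on $L$ (equivalently on $d_\eps$) and invoking Proposition \ref{prop:KPZ-asymp} yields $\eps(L-\tilde s) \to \ln(1 + e^{c-d_\eps})$ in probability, uniformly over $d_\eps$ in compact sets. Combined with $d_\eps \to \hat\xi$ in distribution and the continuity of the map $d \mapsto d + \ln(1 + e^{c-d})$, the continuous mapping theorem then gives
$$
d_\eps + \eps\left(L - \tilde s\right) \xrightarrow[\eps \to 0]{d} \hat\xi + \ln\left(1 + e^{c-\hat\xi}\right) = \alpha,
$$
as claimed.

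The hard part will be the transfer from the \emph{deterministic}-$d$ statement of Proposition \ref{prop:KPZ-asymp} to the \emph{random} argument $d_\eps = d_\eps(L)$. I would make this rigorous by testing against an arbitrary bounded continuous function $g$ and conditioning on $L$:
$$
\E\left[g\left(d_\eps + \eps(L-\tilde s)\right)\right] = \E_L\left[\E\left[\,g\left(d_\eps + \eps(L-\tilde s)\right)\,\big|\,L\,\right]\right].
$$
By the uniform-in-compacts convergence, the inner conditional expectation converges to $g\left(d_\eps + \ln(1 + e^{c-d_\eps})\right)$ uniformly for $d_\eps$ in any fixed compact set; the tightness of $d_\eps$ (inherited from its convergence to the genuine random variable $\hat\xi$) together with the boundedness of $g$ controls the complementary event, so the outer expectation converges to $\E\left[g\left(\hat\xi + \ln(1 + e^{c-\hat\xi})\right)\right]$ via the weak convergence $d_\eps \to \hat\xi$. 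The uniformity furnished by Proposition \ref{prop:KPZ-asymp} is exactly the ingredient that licenses substituting the random $d_\eps$ for the fixed parameter $d$, and it is there that the bulk of the care is required.
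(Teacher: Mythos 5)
Your proposal is correct and follows essentially the same route as the paper's proof: use Theorem \ref{th:One-Point-Coloring} to realize $h_{t,\hat S_K}(0)$ as $f_{\N,K,h_{t,S}(0)}$, feed in the KPZ limit $d_\eps \to \hat\xi$ from Theorem \ref{th:KPZ-step} with $\hat\sigma = \hat t/4$, and invoke the uniform-in-$d$ law of large numbers of Proposition \ref{prop:KPZ-asymp} to get $\eps\left(h_{t,S}(0) - h_{t,\hat S_K}(0)\right) \to \ln\left(1+e^{c-\hat\xi}\right)$, concluding by a Slutsky-type argument. The only difference is one of detail: the paper compresses the transfer from deterministic $d$ to the random $d_\eps$ into the parenthetical ``using uniformity in $d$,'' whereas you correctly spell out the conditioning, tightness, and test-function argument that makes this step rigorous.
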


\begin{proof}

By Theorem \ref{th:KPZ-step} 
$$
\lim_{\eps \to 0} \eps \left( h_{t,S} (0) - (\hat \sigma \eps^{-3} - \eps^{-1} \ln \eps) \right) = \hat \xi,
$$
where the convergence is in distribution. Equation \eqref{eq:One-Point-Coloring} implies that $h_{t,\hat S_K} (0)$ has the distribution $f_{\N,K,h_{t,S} (0)}$. Applying the Law of Large Numbers given by Proposition \ref{prop:KPZ-asymp} (and using uniformity in $d$ provided in the statement of that proposition), we obtain the convergence of $\eps \left( h_{t,S} (0) - h_{t,S_K} (0) \right)$ to $\ln \left( 1 + e^{c- \hat \xi} \right)$. This implies the statement of the theorem. 
\end{proof}

\begin{remark}
In the limit regime of Theorem \ref{th:main-KPZ}, we see different answers for the step initial condition and for the $K$ particles Mallows initial condition. Note, however, that in order to see the difference in the values of the height function at 0, we need to grow $K$ in a very particular way -- otherwise, new effects will not appear. 
\end{remark}

\begin{remark}
There exists a generalization of Theorem \ref{th:KPZ-step}, see \cite{BG}, \cite{ACQ}, for the multi-point convergence in space to a stationary stochastic process with the single-point distribution $\xi$. Using such a generalization and Theorem \ref{th:Many-Point-Coloring} instead of Theorem \ref{th:One-Point-Coloring}, one can extend Theorem \ref{th:main-KPZ} to a multi-point convergence for $K$ particles. We leave this to future work. 
\end{remark}

\begin{remark}
In the $q = 1 - \eps \to 1$ regime, the Mallows distribution grows away from the identity permutation (in comparison with the case of fixed $q$). Thus, our Mallows initial condition is significantly different from the configuration in which $K$ particles are packed next to each other. In particular, it easily follows from the known properties of the Mallows measure (see, e.g., \cite[Section 1]{BP}) that there will be of order $\eps^{-1} = \frac{1}{1-q}$ gaps between particles in our initial condition. Since our fluctuations also live on the scale $\eps^{-1}$, this limiting regime is sensitive to such a change of the initial condition.  
\end{remark}

\end{document}